\newcommand{\sr}{soft feedback}
\newcommand{\SR}{Soft Feedback}
\def\bx{\mathbf{x}}
\def\bv{\mathbf{v}}
\def\bfomega{\boldsymbol{\omega}}
\def\norm#1{\Vert #1 \Vert}
\def\ones{\mathbf{1}}
\newtheorem{assumption}{Assumption}
\newtheorem{theorem}{Theorem}
\begin{document}
%
% paper title
% Titles are generally capitalized except for words such as a, an, and, as,
% at, but, by, for, in, nor, of, on, or, the, to and up, which are usually
% not capitalized unless they are the first or last word of the title.
% Linebreaks \\ can be used within to get better formatting as desired.
% Do not put math or special symbols in the title.
\title{\huge Social influence makes self-interested crowds smarter:\\an optimal control perspective}
%
%
% author names and IEEE memberships
% note positions of commas and nonbreaking spaces ( ~ ) LaTeX will not break
% a structure at a ~ so this keeps an author's name from being broken across
% two lines.
% use \thanks{} to gain access to the first footnote area
% a separate \thanks must be used for each paragraph as LaTeX2e's \thanks
% was not built to handle multiple paragraphs
%

\author{Yu Luo$^1$,
        Garud Iyengar$^2$, %~\IEEEmembership{Member,~IEEE,}
        Venkat Venkatasubramanian$^1$$^\ast$% <-this % stops a space
\thanks{
%Yu Luo, 
$^1$Department of Chemical Engineering, Columbia University%, Email: yl2750@columbia.edu
}%
\thanks{%Garud Iyengar, 
$^2$Department of Industrial Engineering and Operations Research, Columbia University%, Email: garud@ieor.columbia.edu
}%
\thanks{$^\ast$To whom correspondence should be addressed. %, Department of Industrial Engineering and Operations Research, Columbia University, 
Email: venkat@columbia.edu}
%\thanks{
%Yu Luo and Venkat Venkatasubramanian are with the Department of Chemical Engineering, Columbia University.}
%\thanks{Garud Iyengar is with the Department of Industrial Engineering and Operations Research, Columbia University.
%M. Shell was with the Department
%of Electrical and Computer Engineering, Georgia Institute of Technology, Atlanta,
%GA, 30332 USA e-mail: (see http://www.michaelshell.org/contact.html).
%}% <-this % stops a space
%\thanks{Corresponding author: Venkat Venkatasubramanian. %: to whom correspondence should be addressed. 
% E-mail: venkat@columbia.edu}% <-this % stops a space
%\thanks{Manuscript received April 19, 2005; revised August 26, 2015.}
}

\maketitle

% As a general rule, do not put math, special symbols or citations
% in the abstract or keywords.
\begin{abstract}
% !TEX root = sr2-main.tex

% Multiple intelligent
It is very 
common to observe crowds of individuals % collectively
solving similar problems % is
% a commonly observed phenomenon.
%in ecological, economical, and political
                                %systems. 
with similar information in a largely independent manner. 
We argue 
here
%in this work 
that crowds can become ``smarter,'' i.e.,~more efficient and robust, by  partially
following the average opinion. This observation runs counter
to the widely accepted claim that the wisdom of crowds deteriorates with
social influence. %We argue 
The key difference is that individuals
are self-interested and hence will 
reject feedbacks that do not improve
their performance. We propose a control-theoretic methodology to compute the degree
of social influence, i.e.,~the level to which one accepts the population
feedback, 
that optimizes performance.
We conducted an % behavioral research 
experiment 
with human subjects
%using Amazon Mechanical Turk 
($N=194$), where %we first asked 
the
participants 
were first asked
to %explore the solution to 
solve
an optimization problem
independently, i.e.,~under no social influence. 
%In the control group, we asked the experimental subjects to solve a problem independently by trial and error.
% We determine, using control theory,
Our theoretical methodology estimates a $30\%$ degree of social influence to be
optimal,  % which hypothetically would improve the
resulting in a $29\%$ improvement in the crowd's performance. 
%We determine, using control theory, the optimal characteristic social
%influence %, by analyzing experimental results of an Amazon Mechanical
%Turk  xperiment ($N=194$) with optimal control techniques, 
%to be 30\%, which would improve the crowd's learning efficiency (i.e.,~total mean squared error over time) by 29\%.
%We also discover that simply allowing the participants to partially
%follow the average as a recommendation helped the crowd reach the same
%level of improvement. 
%We also discover that, simply by allowing the participants to partially
%follow the average as a recommendation, the crowd could reach the same
%level of improvement with  
%The observed characteristic social influence ($32\%$) is close to the optimal value.
%%behavioral research experiments and optimal control techniques. 
%The optimal social influence
%%The ideal social influence, based on our multi-agent control model, 
%depends on three factors: individual learning efficiency, disturbance, and the duration. 
%In the experiments, we let participants to solve a problem first
%independently then they were given the population average as a crowd
%recommendation (as treatment). The overall efficiency of the treatment
%group was improved by 29\% and the social influence was determined to be
%31\%, which coincidentally was the ideal social influence later solved by
%optimal control.  
%%We discovered that the social influence ($\sim30\%$) is close to the
%%ideal social influence we later solved via optimal control.  
We then let the same cohort solve a new problem and have access to the
average opinion. Surprisingly, we find the average degree of social
influence in the cohort to be  $32\%$  with a $29\%$ improvement in performance: In
other words, the 
crowd self-organized into a near-optimal setting. %without any
%external interference.
%We also observe a similar level of social influence (32\%) and
%improvement (29\%) from our experiment group: Subjects were given the
%average opinion as a recommendation whenever they entered  a new
%guess. In other words, the optimal social influence formed without
%external interference.  
%The self-interest based filtering of the recommendation appears to be key
%to the success of %vox populi in this correlated environment. 
%such improvement
%improving the collective learning process 
%and forming the optimal social influence without external interference. 
%Such improvement 
We believe this new paradigm for making crowds ``smarter'' has the
potential for making a significant impact on a diverse set of fields
including population health to government planning.
% This discovery could make a 
% difference in large-scale health
% optimizations, local government planning, or international collaboration
% on global challenges.  
We include a case study to show how a crowd of states can collectively
learn the level of taxation and expenditure 
%tax and expenditure policies
that optimizes economic growth. % strategy evolution to demonstrate how such collective learning

\end{abstract}

% Note that keywords are not normally used for peerreview papers.
%\begin{IEEEkeywords}
%IEEE, IEEEtran, journal, \LaTeX, paper, template.
%\end{IEEEkeywords}

% For peer review papers, you can put extra information on the cover
% page as needed:
% \ifCLASSOPTIONpeerreview
% \begin{center} \bfseries EDICS Category: 3-BBND \end{center}
% \fi
%
% For peerreview papers, this IEEEtran command inserts a page break and
% creates the second title. It will be ignored for other modes.
\IEEEpeerreviewmaketitle

% !TEX root = sr2-main.tex

\section{Introduction}\label{section1}

%\dropcap{Q}

\IEEEPARstart{O}ften, %{\em crowds} %of intelligent individuals attempt 
large \emph{crowds} of decision makers are attempting to solve 
the same problem with similar information in a largely independent manner. For the common man, these problems could be as simple as choosing the most appropriate product % merchandise
or improving personal fitness. %for crowds of the common people.
For a crowd % such as
of local governments or nations, the problem could be %as % complicated
optimal taxation to  promote economic growth. % maintain political stability, or overcome global   challenges.
% for crowds such as local governments and nations.
The process of %adjusting actions, revising decisions, and optimizing strategies 
identifying the appropriate decision % (or solution) 
involves an expensive trial and error process to explore the entire space. 
Minimizing this search cost by  
% Can we 
coordinating and % more importantly,
improving this {\em collective learning} process, by making crowds ``smarter,''
has immense societal value.

Optimization typically involves balancing trade-offs.
Consider the problem of optimal taxation.
Under-taxation results in insufficient funds towards public services and government functioning, whereas over-taxation %discourages business and hinders economic growth. 
drives businesses to places where taxes are lower, leading once again to a deficit for the state.
Local governments face similar dilemma % is faced by the policymakers for
when setting 
expenditure % strategies
 to balance between under- and over-spending.
To illustrate the learning process % of each individual state as well as the nation as a whole, 
to set the optimal taxation and expenditure, in Fig.~\ref{fig:TaxExp} we plot the license tax as a fraction of the total state revenue from 1946 to 2014, and the secondary education expenditure as a fraction of the total state spending from 1977 to 2013. 
The %selected tax and expenditure 
trajectories appear to have converged in the last decade. A large majority of the states have 
converged to the same decision. 
The main question we address in this paper is whether one can accelerate convergence by making the crowd of fifty states ``smarter.'' % For
Even a small improvement in the convergence rate, magnified by the scale of the problem, could potentially save the nation billions of dollars while improving the overall welfare.

\begin{figure*}
	\centering
	\includegraphics[width=0.45\textwidth]{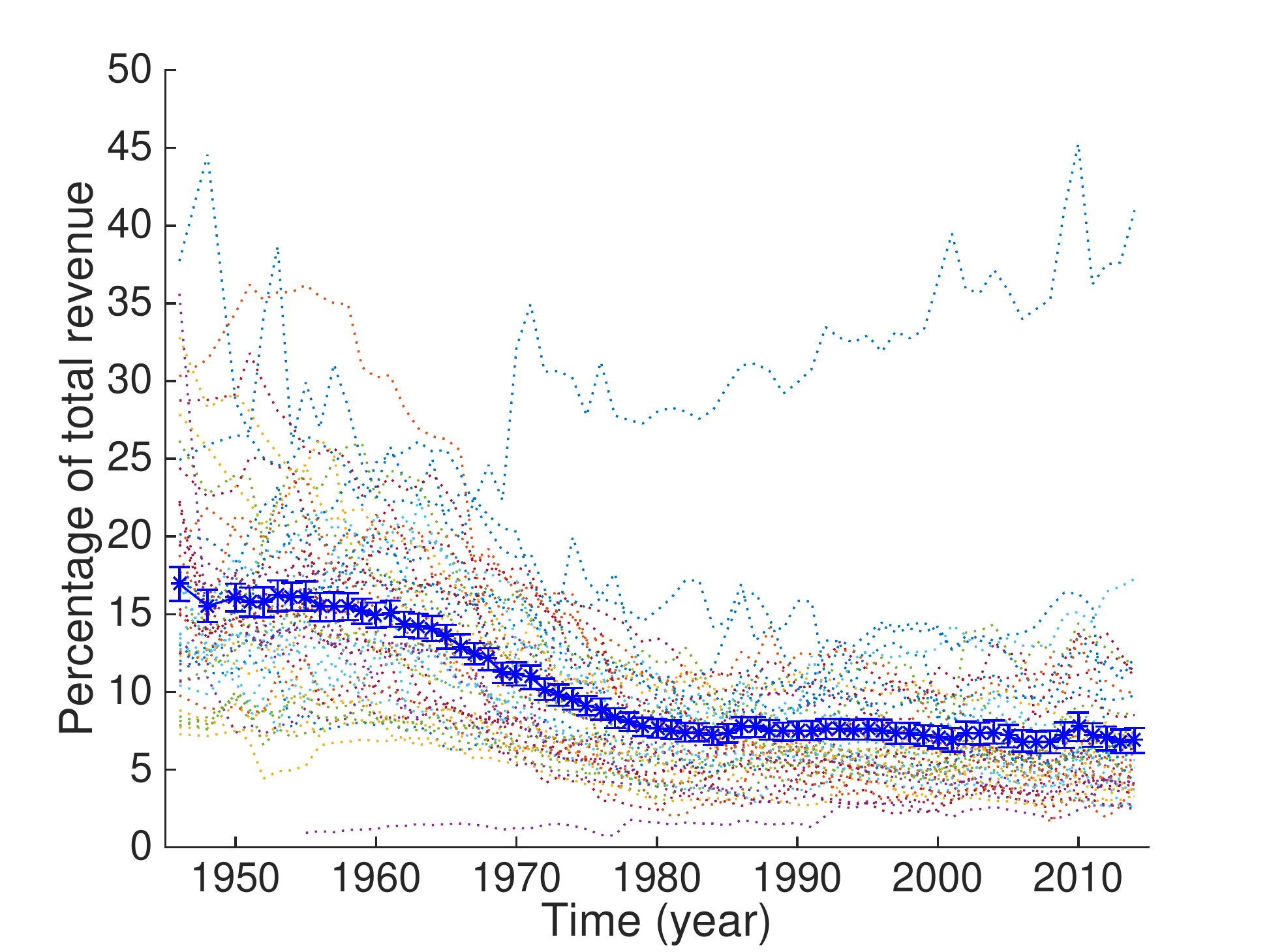}
	\includegraphics[width=0.45\textwidth]{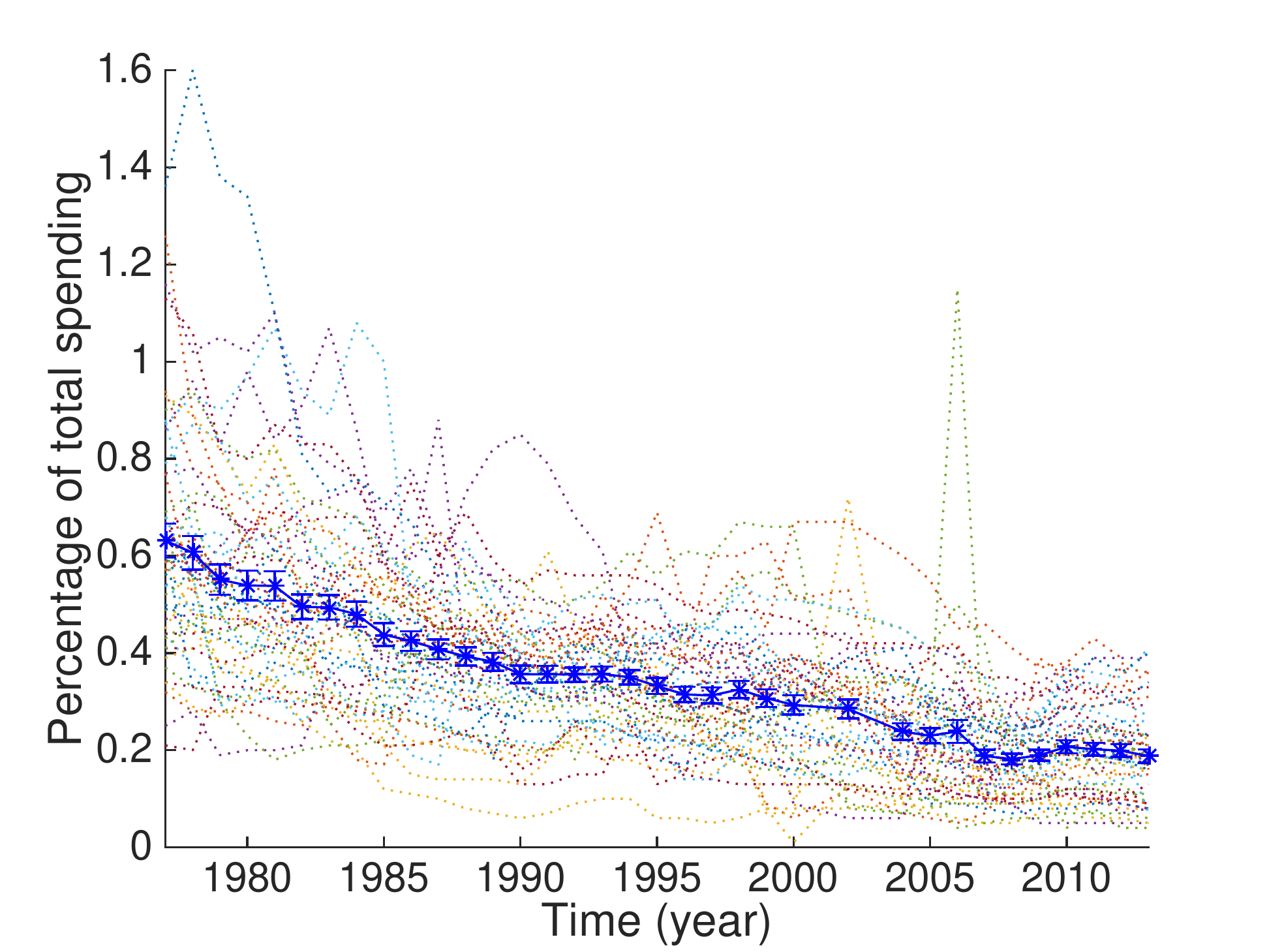}	
	\caption{Left: state tax percentage of total revenue (total license taxes, from 1946 to 2014)~\cite{bureau2015historical}. Right: state expenditure percentage of total spending (secondary education direct administrative expenditures, from 1977 to 2013)~\cite{center2015expenditure}. 
Each colored dashed line indicates the time series for one of the 50 states (and District of Columbia). The blue dotted line indicates the arithmetic mean. Error bars reflect the standard errors of the mean.}
	\label{fig:TaxExp}
\end{figure*}

Using a coordinated crowd or swarm to solve complex problems is well studied in the literature. 
Particle swarm optimization (PSO)~\cite{kennedy2010particle} % one of the 
is a widely adopted global optimization technique % in the 1990s and early 2000s,
that uses a crowd of simple solvers to explore the fitness landscape of a problem. 
This swarm of PSO solvers mimics the swarming behavior observed in nature, e.g.,~among bees, ants, 
and birds. % flocking) 
Each PSO solver revises its search direction based on its past performance and 
the position of the solver that observes the highest fitness.
The PSO technique is very effective in solving deterministic problems that have multiple local extrema. However, PSO or any other parallel computing methodology cannot help us in improving the rate for learning in the optimal taxation and expenditure setting. The critical difference is that in the PSO setting each solver observes the same function; however, the reward or fitness of an individual in a crowd is typically subjective, private, very noisy, and often, not even numerically expressible. On the other hand, the \emph{inputs} to the fitness function are numerically well defined. We exploit this feature to develop a learning algorithm.

Wisdom of crowds describes the phenomenon~---~first introduced as {\it vox populi} in 1907 by Francis Galton~\cite{galton1907vox}, then rediscovered and popularized by James Surowiecki a century later~\cite{surowiecki2005wisdom}~---~that the average opinion of a crowd is remarkably close to the otherwise unknown truth
although the opinions of individuals in the crowd are very erroneous. % lie far from it.
This phenomenon partially justifies the efficiency of polling and prediction markets, where a surveyor can gather an accurate estimate of an unknown variable by averaging over multiple independent and informed guesses. 
Explanations~\cite{de2014essai,bergman1964analysis,simons2004many} for %its success 
the success of the wisdom of crowds % effect
assume that 
individuals' estimates are unbiased and \emph{independently} distributed~\cite{surowiecki2005wisdom,kittur2008harnessing,goldstone2009collective,lorenz2011social,quinn2011human,alvarez2011representing}. 
Social influence % poses a big threat to 
renders the wisdom of crowds ineffective
~\cite{goldstone2009collective,lorenz2011social,sumpter2009quorum}, and in order to guarantee accuracy, interactions among the respondents should be discouraged. Since individuals make decisions solely based on their prior knowledge and expertise, some even suggest {\it vox expertorum}, instead of {\it vox populi}, to be a more suitable name~\cite{goldstone2009collective,galton1907ballot,conradt2005consensus}. 

Increasingly, today individuals are getting all their information from 
highly inter-connected online social networks; 
thus, 
truly independent opinions are becoming rare. % increasingly difficult
% After more than a century since its inception, one wonders whether 
The existing literature suggests that {\it vox populi} should % have outlived its relevance.
not be effective.
%The convenience of communication and social networks is both a blessing and a curse: While it is effortless nowadays to gather information, opinions are easily correlated, biased, and more often than not, converging to myopia. 
%Interestingly, Nature appears to be rather good at coordinating intelligent agents. We often observe the so-called ``swarm intelligence'' in natural systems such as cell migration, birds flocking, and ant colony where the swarm, as a whole, is more intelligent than its constituents. The individuals opt to join the swarm for a better chance of survival. 
%Some interpret this phenomenon as {\it vox populi}~\cite{}, also known as the \emph{wisdom of crowds}~\cite{surowiecki2005wisdom}. Such wisdom appears to be especially helpful in identifying good options, such as movies, TV shows, restaurants, etc., in self-organized online communities. 
%As the society shifts towards the highly connected online social networks, where people can easily and quickly get updated information on a variety of topics in almost real-time, obtaining truly independent opinions is becoming increasingly difficult. Hence, after more than a century since its inception, one wonders whether {\it vox populi} has outlived its relevance.
%Opinions are easily correlated, biased, and more often than not, converging to myopia. 
% On the other hand,
And yet,
online networks with very high degree of social interaction % still
appear to be able to harness % such wisdom
information effectively to benefit the individuals. % For instance, 
We
are relying on polling evermore, for selecting movies, restaurants, books, shows, etc. The polls appear to be working in identifying good options, even though the votes are highly correlated. %~\cite{}. 
The crowd benefits from these interactions by converging to optimum faster. Social influence here improves, rather than undermines, the collective learning process. 
% This observation naturally leads to the question: What is then the
How does one reconcile with the previous results on the degradation of the impact of {\it vox populi} in the presence of social influence? {\em Is there an optimal degree of social influence for a learning crowd?} This is the question we address in this study. 
%To answer this question, we are facing 
%Answering this question involves 
%the same challenge of adjusting tradeoffs: 
%There is also a tradeoff:
%The right level of social influence means a proper balance between 
%%Much like learning itself, social influence involves a tradeoff between
% being independent and being correlated. The former prevents individuals to harness the wisdom of crowds, while the latter allegedly weakens such effect.
%It turns out, this question itself represents a tradeoff between too little and too much social influence. 

In this work, we use control theory % theoretic perspective,
to show that \emph{self-interested} decision makers can benefit by {\em partially} following the wisdom of crowds. % can benefit intelligent learners (problem solvers).
% and more importantly how much social influence is optimal. 
Too little social influence 
% The social influence reflects how much an individual adjusts his/her opinion towards the others. An optimal value balances between too little and too much social influence: The former 
prevents individuals to harness the wisdom of crowds effect; however, too high a social influence % while the latter potentially 
%weakens the {\it vox populi} effect. 
has an adverse effect on the accuracy of the wisdom of crowds.
The optimal degree of social influence balances these two effects. 
 % as argued in literature.
%
%Online networks with very high degree of social interaction still appear to be able to harness the wisdom of crowds and benefit individuals. Successful examples of collective intelligence include online review systems and real-time traffic information.
%In this work, we explain, through a process systems engineering approach, how partially following the wisdom of crowds can benefit self-interested social learners, and therefore accelerate the collective learning process. %For they \emph{selectively} follow the crowd: They would reject 
% learning persists in the face of a high degree of social interaction. Our main thesis is that self-interested agents \emph{selectively} follow the average feedback: They would reject the recommendation when they deemed it unhelpful, and only follow recommendations that improved performance.

We designed a human subject experiment called the 
``Fitness Game'' that mimics the real-world situation where
individuals alter their diets to improve health. %Using PSE techniques, we identify the individual learning dynamics, investigate the effect of partially following the crowd, and determine the optimal social influence. 
By analyzing experiment results, we identify the individual learning dynamics, determine the average degree of social influence when subjects % could
partially follow the wisdom of crowds feedback,
and calculate the optimal degree of social influence that could have maximally improved the crowd's performance.

\section{Experiment Design}
We conducted an online experiment on Amazon Mechanical Turk with human subjects. %to identify how social influence affects the collective learning process.
There were three sets of experiments: B, N, and S. We focus our analysis on set B ($N=194$) only but present the final results for all three sets. 
Each set consisted of five replications of the experiment with its unique % experimental
conditions. % under which the experiment would be repeated five times.
%There were five experimental replications per group (and three experimental groups in total). %in total, 
%10 to 50 participants were allowed in a single replication. %In addition, we also carried out ten more replications with varied conditions and %present the results in the Supporting Information (SI).
%altogether they consist three experiment series: B, N, and S (see Table~\ref{tab:all_results} for all the results).
%We use the first group (B series, five replications, $N=194$) as our primary analysis focus of this paper. All results including three experimental groups (B, N, and S series, fifteen replications in total) as well as state tax and expenditure case studies are included in Table~\ref{tab:all_results}.

The participants (or players of the ``Fitness Game'') were asked to 
%In the game, the
%participants attempt to
 estimate the ``diet level'' that maximizes the
``fitness'' of a virtual character. The true relationship between the diet
level and fitness was a given deterministic and concave function (i.e.,~there exists a unique diet level that maximizes the fitness); however, the players
received a noisy value of the fitness associated with the guessed diet
level. 
This noise, in reality, could be from other external factors such as environment and mood.
The players were allowed multiple guesses, and were rewarded
instantly based on the character's fitness level. 
The players also received monetary rewards based on their relative performances. 

We conducted five replications of the ``Fitness Game'' for each experiment set. In replication
$p\in \{1, \ldots, 5\}$, the $n_p$ participants first entered a session
where they played the game in an \emph{open loop} for 240 seconds (four
minutes). In this session, each participant entered a series of guesses to
best  predict the unknown optimal diet level $\theta^\ast \in
[2000,2500]$~kcal. When a player entered a guess for the optimal
$\theta^\ast$, the interface would refresh and the player would see the virtual character's  fitness level (maximum $100$\%) for the
guessed value. The player could then enter a new value until this session
ended.
The term open loop indicates that individual decisions did not interact with each other; thus, the {\it vox populi} feedback was absent. 

Subsequently, the same cohort entered the treatment session where they played the same game
with a population feedback. The game was reset and a new optimal
diet level $\theta^\ast$ was chosen. In this session, in addition to the
fitness level corresponding to their own guess $z$, players also
received a feedback saying ``We recommend $\frac{1}{n_p}\sum_{i=1}^{n_p}
{z}_{i}$ kcal,'' where ${z}_{i}$ denotes the most recent guess of
the $i$-th player. This feedback only updated when the
  players took actions. The players had the option of using the feedback
in any manner they desired. See Appendix~\ref{app_game} for detailed
descriptions of the ``Fitness Game'' interface.  

In this treatment
group, we revealed the population average of the diet level to each player.
%as the crowd recommendation. 
Thus, the choices of the players were 
\emph{not} independent. %  in the treatment group. 
However, we allowed the
players the freedom to accept, reject, or partially accept such
a population feedback,
%recommendation, 
i.e.,~set the diet level to be a combination of their
individual guesses and the %population 
feedback. % (how much one would
% weigh the recommendation is the social
% influence).  
We call this ``\sr{}'' in the sense that learners are %not required
%to follow any particular learning strategy. 
allowed to choose the degree to which they adopt the feedback.

In a previous work~\cite{luo2016soft}, we  had introduced the possibility of partial acceptance of population recommendation in the  context of regulating emerging
industries. In the regulatory context, we termed
this as ``soft'' regulation in contrast to the conventional ``hard''
regulation where the regulated entities face fines and other punitive
consequences for non-compliance. 
In our current setting, the individuals are allowed to partially accept the population feedback. This contrasts feedback in control theory, which is hard in the sense that it has to be followed.
We showed that soft regulation
is appropriate and efficient (and  desirable) when the  observed
outcomes are very noisy, individual decision makers are rational
utility-maximizing agents, and the agents are exploiting abundant
resources, and therefore, not competing. 
Medical research and health
optimization using large-scale social interactions, for example via
Apple's ResearchKit and CareKit~\cite{apple2016researchkit}, are
examples of systems that satisfy these three conditions. The ``Fitness
Game'' is meant to mimic these conditions.  

%The accuracy $\eta(y)$ of the guess $y$ is  defined as $\eta(y)=1-\big(
%(y-\theta^\ast)/500\big)^2$. In Figure~\ref{fig:AccuBasic}~(a), we plot the
%accuracy averaged across  players from all five batches, i.e.,~$(1/\sum_p
%n_p) \sum_p\sum_i\eta(y_{p,i}(t))$, as a function of time. In (b), we plot
%the accuracy of  {\it vox populi} averaged across the $5$ replications,
%i.e.,~$(1/5)\sum_p\eta\big(\frac{1}{n_p}\sum_iy_{p,i}(t))$, as a
%function of time. 

Upon completion, participants % would
received monetary rewards based on their relative game scores within the same cohort. We hoped to incentivize the participants in this way so that they would make rational decisions and actively optimize their virtual character's fitness, instead of making random guesses to get the participation rewards. 

\section{A Multi-Agent Control Model}
%To better understand the empirical results, 
We propose the following state-space control model to describe the collective dynamics of an $n$-agent crowd in the open loop setting:
\begin{equation}
x_i(t+1)=g_i\big(x_i(t)\big)+\omega_i(t). \label{eq:open}
\end{equation}
In % a collective learning (or soft regulation)
the \sr{} setting, we have
\begin{align}
x_i(t+1)&=(1-\beta_i)\Big(g_i\big(x_i(t)\big) + {\omega}_i(t)\Big)
+\beta_iu(t).%\\
%y_i(t) &= y_0 - \kappa x_i(t)^2,\\
%z_i(t) & = x_i(t) + \theta^\ast
\label{eq:ScalarDynamics} 
\end{align}
where $x_i(t)$ is the state variable of the $i$-th agent $(i=1,\ldots,n)$ at time $t$; $g_i(\cdot)$ is the learning function; $\omega_i(t)$ is a zero-mean random variable; $u(t)$ is the 
%crowd recommendation, 
soft feedback,
and $\beta_i$ is the {\em degree} of social influence.\\ %, $y_i$ is the fitness level 

\subsection{State $x_i(t)$ of the $i$-th Agent}
The state variable $x_i(t)=z_i(t)-\theta^\ast$ is the {\em decision error}, i.e.,~difference between 
%reflects how far the 
the individual decision $z_i(t)$ 
%is to 
and the optimal decision $\theta^\ast$. 
%This variable can be positive or negative. 
%For instance, a positive error indicates over-taxation/expenditure/consumption, etc.
%, and a negative $x$ indicates under. 
%From a taxation/expenditure angle, a positive value indicates over-taxation/expenditure and a negative value indicates under-taxation/expenditure. From a fitness angle, a positive value indicates a calories excess (i.e.,~one consumes more than necessary) and a negative value indicates a calories deficit. The learning process updates the state variable such that eventually the ideal balance is reached as $x_i^*=0$.
$x_i^*=0$ indicates the optimal state (or the solution). %where the solution $z_i=\theta^\ast$ is.

\subsection{Learning Function $g_i(\cdot)$ and Noise $\omega_i(t)$}
The learning function $g_i$ of
the $i$-th player % in the absence of noise
% he
% function $g_i(x)$
encodes the process where the agent makes a decision,
observes the corresponding utility, and then updates the
state. % An important assumption is that the
We assume that the 
optimal state $x_i^\ast=0$ is an attracting and unique fixed point of $g_i(\cdot)$, i.e.,~$g_i(0)=0$. Thus, % means, 
regardless of the optimization technique or the initial decision, an agent can always reach the optimum. We further assume that $g_i(x)$ is differential and
$|g'_i(x)|<1$, i.e.,~$g_i(x)$ is a
contraction~\cite{browder1965fixed}. The closer $|g_i'(x)|$ is to $1$, the
slower $g_i(x)$ converges. %, thus $1- \max_{x \in \mathbb{X}}|g_i'(x)|$ % can be an indicator of 
%is a good proxy for open loop efficiency of the $i$-th player.
From mean value theorem, we can also establish that $g_i(x)/x=g_i'(\delta
x)$, where $0\leq\delta\leq 1$, is strictly less than 1. % We define
% $\tilde{g}'_i=g_i(x)/x$ as the learning
% gain (i.e.,~the amplification of error). 
We define learning gain, denoted by $\tilde{g}_i'\equiv g_i(x)/x$, as the amplification of decision error.

$\omega_i(t)$ is a zero-mean %and time-independent
random variable with variance $\sigma_\omega^2$ sampled at $t$. % that represents the impact
%of the error in function evaluation on the decision.
It represents the impact of the error in function evaluation on the decision. Such error can be a result of noise in measurement or external disturbance.

\subsection{Soft Feedback $u(t)$}
%Inspired by the wisdom of crowds, 
We denote the soft
feedback
%crowd recommendation 
as the population average:
\begin{equation}
u(t) = \frac{1}{n}\sum_{j=1}^nx_j(t)
.\end{equation}
%In the experiment and implementation, we have $u(t)=1/n\sum_{i=1}^nz_i(t)$. By subtracting $\theta^\ast$ from both the LHS and RHS of equation $z_i$
Again, unlike feedback in control theory, soft feedback does not have to be followed.
The value of $u$ represents the error of the wisdom of crowds. 

\subsection{Degree of Social Influence $\beta_i$}
The degree of social influence denotes the weight the $i$-th player places on the
%crowd recommendation 
 feedback
while learning. $\beta_i=0$ %indicates an open loop
                                %setting as \eqref{eq:open} describes. 
reduces the \sr{} setting in \eqref{eq:ScalarDynamics} to the open loop setting in \eqref{eq:open}.
%{We call agents with $\beta_i=0$ % implies that the player 
%``explorers'' since they entirely ignore the
%recommendation, and we call those with 
%$\beta_i=100\%$ ``followers'' since they % implies that the player blindly
%                                % copies
%simply copy the population average. 
%% (i.e., the player is a ``follower''), and an
%Agents with $0 < \beta < 100\%$ % intermediate value represents
%% a
%compromise between % one's
%their own estimation and the recommendation.}  %We assume  $g_i$ is differentiable at $0$, and $|g_i'(0)| <
%       %1$. This ensures that $x_i=0$ is a stable fixed point of $g_i$. 

%\subsection*{Soft Regulation Collective Dynamics}

\subsection{Convergence of the \SR{} Mechanism}
We have previously established the following properties of 
%soft regulation
\sr{}~\cite{luo2016soft}: 
%Suppose the noiseless open loop system
%where $x_i(t+1) = g_i\big(x(t)\big)$ eventually converges to the optimum $x_i^\ast=0$.%, i.e.,~$y_i(t)
%%\rightarrow \theta^\ast$. 
%Then the noiseless soft regulation will also
%converge to optimum provided $\beta_i < 100\%$, i.e.,~
Social influence does
not destabilize the system, nor does it alter the convergence provided $0\leq\beta_i < 100\%$. 
%From the lear, we have %the following noiseless soft regulation dynamics
We can write the noiseless \sr{}
dynamics by replacing $g_i(\cdot)$ with the learning gain $\tilde{g}_i'(t)$ computed at $t$:
\begin{equation}
x_i(t+1)=(1-\beta_i)\tilde{g}_i'(t)x_i(t) + \beta_i u(t).\label{eq:simpledynamics}
\end{equation}
%where $g_i'(t)=g_i'\big(\delta_ix_i(t)\big)$ and $0\leq\delta_i\leq 1$. 
%Following our previous assumption, we also have $|g_i'(t)|<1$. 
In vector form, we have %can write dynamics of the crowd as
\begin{equation}
\mathbf{x}(t+1) = (I - B)G'(t)\mathbf{x}(t)+BS\mathbf{x}(t),
\end{equation}
where $\mathbf{x}(t)=[x_1(t),\ldots,x_n(t)]^\top$,
$B=\operatorname{diag}(\beta_1,\ldots,\beta_n)$,
$G'(t)=\operatorname{diag}\big(\tilde{g}_1'(t),\ldots,\tilde{g}_n'(t)\big)$, and
$S=\frac{1}{n}\mathbf{1}\mathbf{1}^\top$. %(we drop $t$ here for simplicity). 
It is
easy to identify that the largest eigenvalue of the matrix
$(I-B)G'(t)+BS$ is {\em always} strictly less than 1 if $0\leq\beta_i<100\%$. This
implies the \sr{} dynamics also converges to the solution %same
% fixed-point $\mathbf{x}^\ast=\mathbf{0}$, 
and is robust against bounded noise. 
See Appendix~\ref{app_math} for detailed proofs. 

\subsection{Efficiency of the \SR{} Mechanism}
We define the following optimal control problem for %improving the collective learning dynamics:
computing the optimal degree of social influence $\beta_i$ that minimizes the cost function $V$:
\begin{eqnarray}
  \min_{B}\quad
            V\big(B;\mathbf{x}(0),\boldsymbol\omega(t),T\big)=\mathbb{E}\left[
            \sum_{t=0}^{T-1}\frac{1}{n}\mathbf{x}(t)^\top\mathbf{x}(t)\right],  \label{eq:control1}\\ 
  \operatorname{s.t.}\quad 
  \mathbf{x}(t+1) = (I -
                       B)\big(G'(t)\mathbf{x}(t)+\boldsymbol\omega(t)\big)
                       +BS\mathbf{x}(t),
                        \label{eq:control2} 
\end{eqnarray}
where $\boldsymbol\omega(t) = [\omega_1(t),\ldots,\omega_n(t)]^\top$ denotes a series of noise vectors. The above
optimal control problem %solves for the optimal social influence by
% minimizing  
minimizes the 
%total 
cumulative expected mean squared errors (MSE) over a finite time
horizon $T$.% (given initial condition and standard deviation of noise). 

The time evolution of the total MSE in \eqref{eq:control1} depends on two
factors: the rate of convergence controlled by
$\big((I-B)G'(t)+BS\big)\mathbf{x}(t)$ and the noise reduction
controlled by $(I-B)\boldsymbol\omega(t)$. A stronger social influence,
i.e.,~high $\beta_i$, leads to less noise. The contraction effect depends on
the largest singular value of the matrix $(I-B)G'(t)+BS$. %As long
% as the values of $\tilde{g}_i'$ are not
% all identical, there exists a non-zero
% $\beta_i$ profile such that the largest
% singular value is minimized. %See SI for
% more details.  
% The smaller that value is, the faster the system converges. %As long as $\tilde{g}_i'$ is not identical
Given $G'(t)$ and $S$, there always exists a social influence
profile $B$ such that the largest singular value is minimized.  % the
% system converges fastest.  

The overall 
% This is a highly 
problem is a 
% nonlinear and 
non-convex optimization problem and difficult to solve analytically. We
further simplify the problem by assuming %$g_i(x) \equiv \tilde{g} x$ 
$\tilde{g}'_i(t)\equiv\tilde{g}$
is uniform
across all participants,
and the learning function is approximately linear. Similarly, we assume $\beta_i \equiv \beta$ is
uniform across participants. % One can interpret 
% % this assumption either that all agents are identical or that 
% $\tilde{g}'$ and $\beta$ %represent 
% as the characteristic learning gain and social influence of the crowd.
The
original dynamics is reduced into the following linear stochastic
dynamics: 
\begin{equation}
  \mathbf{x}(t+1)=\big[(1-\beta) \tilde{g} +\beta S\big]\mathbf{x}(t)+(1-\beta)\boldsymbol\omega(t)
.\end{equation}
We can obtain the upper bound of the expected MSE as
\begin{equation}
\mathbb{E}\big[\operatorname{MSE}(t+1)\big]\leq m^2\operatorname{MSE}(t)+(1-\beta)^2\sigma_\omega^2,\label{eq:mse}
\end{equation}
where $m=(1-\beta) \tilde{g} +\beta$ (see Appendix~\ref{app_math} for detailed derivation). This is a
conservative (or worst case) estimation of the expected MSE evolution. The
practice of computing the optimal control against worst case scenario is
known as robust control. 
In this setting, one computes the control by % olving such optimization problem 
{\em minimizing} the {\em maximum} cumulative expected MSE. % (i.e.,~an minimax approach).
% It also reduces the computational complexity of the optimal control problem.
We denote the robust solution 
%to the robust control problem 
as $\beta^\ast_\text{R}$:
\begin{equation}
\beta^\ast_\text{R}=\min_\beta\max_{\boldsymbol\omega(t)}V\big(\beta,\boldsymbol\omega(t);\mathbf{x}(0),T\big).
\end{equation}
The worst case $V(T)$ follows
\begin{equation}
\begin{aligned}
\frac{V(T)}{\operatorname{MSE}(0)}&\leq \left[\frac{1-m^{2T}}{1-m^2}+\right.\\%\right.\nonumber\\
%&&\left.
&\left.\left(T - \frac{1-m^{2T}}{1-m^2}\right)\frac{(1-\beta)^2}{1-m^2}\frac{\sigma_\omega^2}{\operatorname{MSE}(0)}\right]\label{eq:controlRobust}.
\end{aligned}
\end{equation}
In Fig.~\ref{fig:PhaseBasic} we plot the value of $\beta^\ast_\text{R}$ as a function of the
% For a given finite
%time horizon $T$, 
noise-to-initial-MSE ratio $\sigma_\omega^2/\operatorname{MSE}(0)$ and the characteristic learning gain $\tilde{g}$ (given $T=30$ and $n=40$). % we calculate the
%the e exists an optimal social influence that minimizes the worst-case total cost (
%). %We argue  that 
A
moderate 
%value for the 
social influence
 %parameter
  is optimal
when systems are uncertain and one needs the system to equilibrate
quickly.  %The experiment results in Figure~\ref{fig:AccuBasic}~(a)
%support this argument. 

\begin{figure}[h!]
	\centering
	\includegraphics[width=\linewidth]{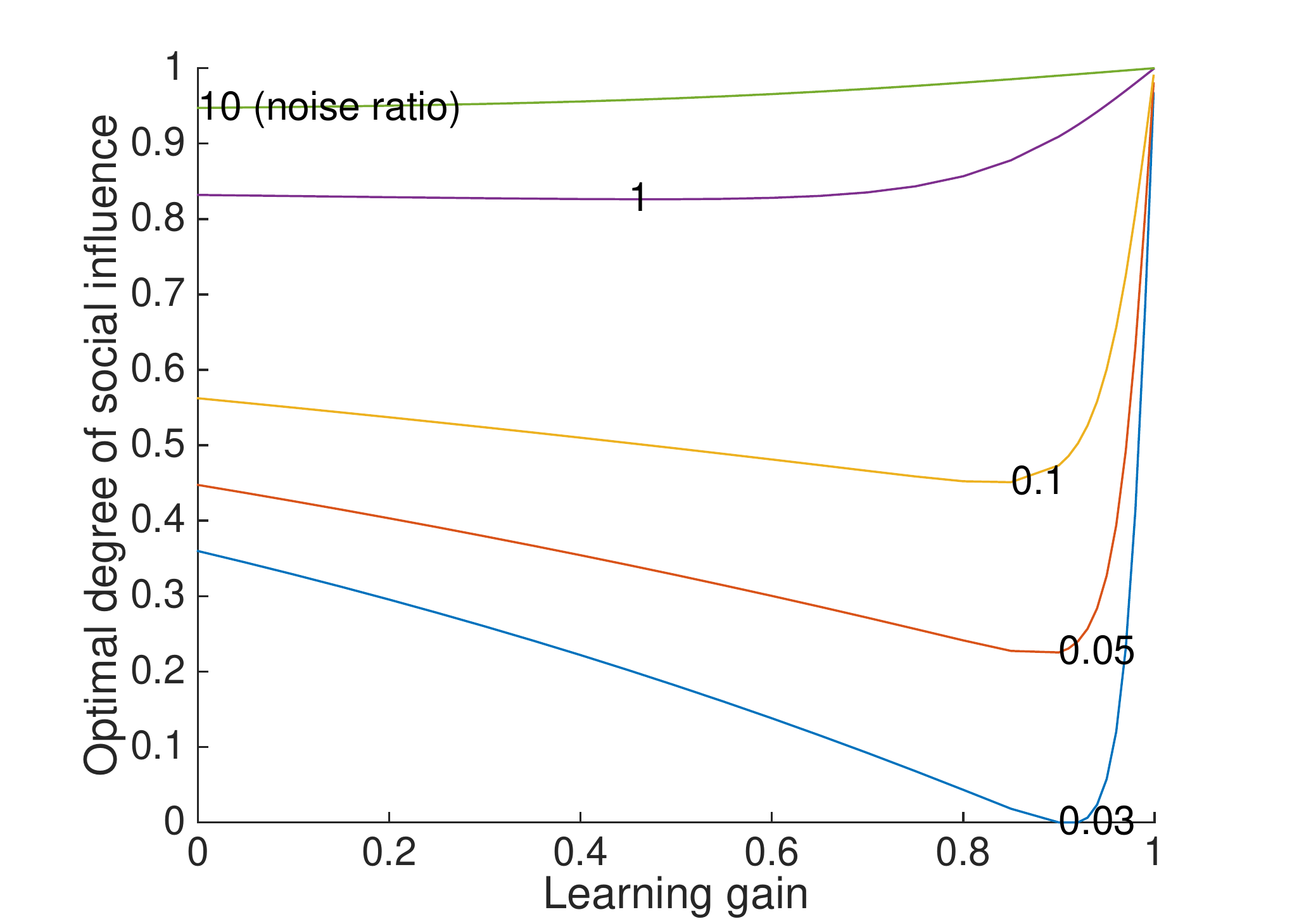}%\\
	\caption{Optimal degree of social influence from robust control by minimizing the RHS of \eqref{eq:controlRobust}. The general trend is that a moderately strong social influence is desirable if the system is uncertain (high noise-to-initial-MSE ratio) or the learning gain is low (fast open loop convergence). An interesting observation is that as the learning gain crosses a certain threshold (e.g.,~0.9), the optimal degree of social influence rapidly increases as the learning gain increases. For a high learning gain, the contraction becomes insensitive to the change in $\beta$ while the noise reduction still does.} 
	\label{fig:PhaseBasic}
\end{figure}

%The stationary
%state $\mathbf{x}^\ast=0$ is robust when the noise is bounded. Suppose
%$\beta_i \equiv \beta$. Then  
%\begin{eqnarray}
%\mathbb{E}\Big[\frac{1}{n} \sum_{i=1}^n \eta(x_i(t)) \Big] 
%  & \approx &  
%   1- m^{2t}\big(1-{\eta}_0\big) \nonumber\\
%  & & \hspace*{-1cm} \mbox{} -\frac{(1-m^{2t})(1-m)}{1+m}\delta,
%\label{eq:accuracy}
%\end{eqnarray}
%where $\eta_0$ is the % complementary
%initial accuracy, the contraction term $m$ and the noise term $\delta$ are
%defined as follows:
%\[
%\begin{array}{rcl}
%  m & = & (1-\beta)\max_{1\leq i\leq n,x\in\mathbb{X}}|g_i'(x)|+\beta, \vspace{1pt}\\
%  \delta & =& \frac{\sigma_\omega^2}{\max_{x\in\mathbb{X}}x^2(1-\max_{1\leq
%              i\leq n,x\in\mathbb{X}}|g_i'(x)|)^2}.
%\end{array}
%\]
%From \eqref{eq:accuracy}, it follows that the optimal value of $\beta$
%is determined by a  trade-off between $\max_{i,x}|g_i'(x)|$ and
%$\sigma_\omega^2$. See SI for detailed proofs. 

\section{Results}

\subsection{Wisdom of Crowds Effect}
Let's begin with the analysis of the wisdom of crowds effect.
%, i.e.,~the accuracy of
%the average estimate (solid blue/red lines in Fig.~\ref{fig:BasicData}a
%and \ref{fig:BasicData}b). The performance of the %open loop {\it vox
%populi} 
We plot the time series 
%learning process 
of each individual player's decision error ($x_i$) as well as that of the
wisdom of crowds ($u$) in Fig.~\ref{fig:BasicData} (similar to the state
tax and expenditure time series in Fig.~\ref{fig:TaxExp}). 
%state ($x_i$) evolution of all players as well as the
%average. %(i.e.,~crowd recommendation $u$) for open loop
%(Fig.~\ref{fig:BasicData}a) and soft regulation
%(Fig.~\ref{fig:BasicData}b).  
The performance of the wisdom of crowds
is clearly superior: $u$ steadily and quickly reaches 
the %optimum (i.e.,~0)
%optimal state
solution
%100\% accuracy 
within the first minute
while individual players lag behind.
%the average performance of individual players (see
%Figure~\ref{fig:AccuBasic}~(a)) lags far behind. 
%Despite lagging slightly behind open loop, the average
%%The accuracy of \emph{vox
%%  populi} 
%in the soft regulation setting is still superior to individual players.
%%, but worse than open loop. 

Fig.~\ref{fig:BasicData} %(left) and Fig.~\ref{fig:BasicData} (right) 
also confirms the behavior observed in the
%This result is consistent with the
literature: The wisdom of crowds significantly outperforms the individual
estimates, but such effect is weakened %when there are social interactions
%between the individuals. 
by social influence. The average in the \sr{} setting (red,
right) slightly lags behind that in the open loop (blue, left).  
\begin{figure*}
	\centering
	\includegraphics[width=0.45\linewidth]{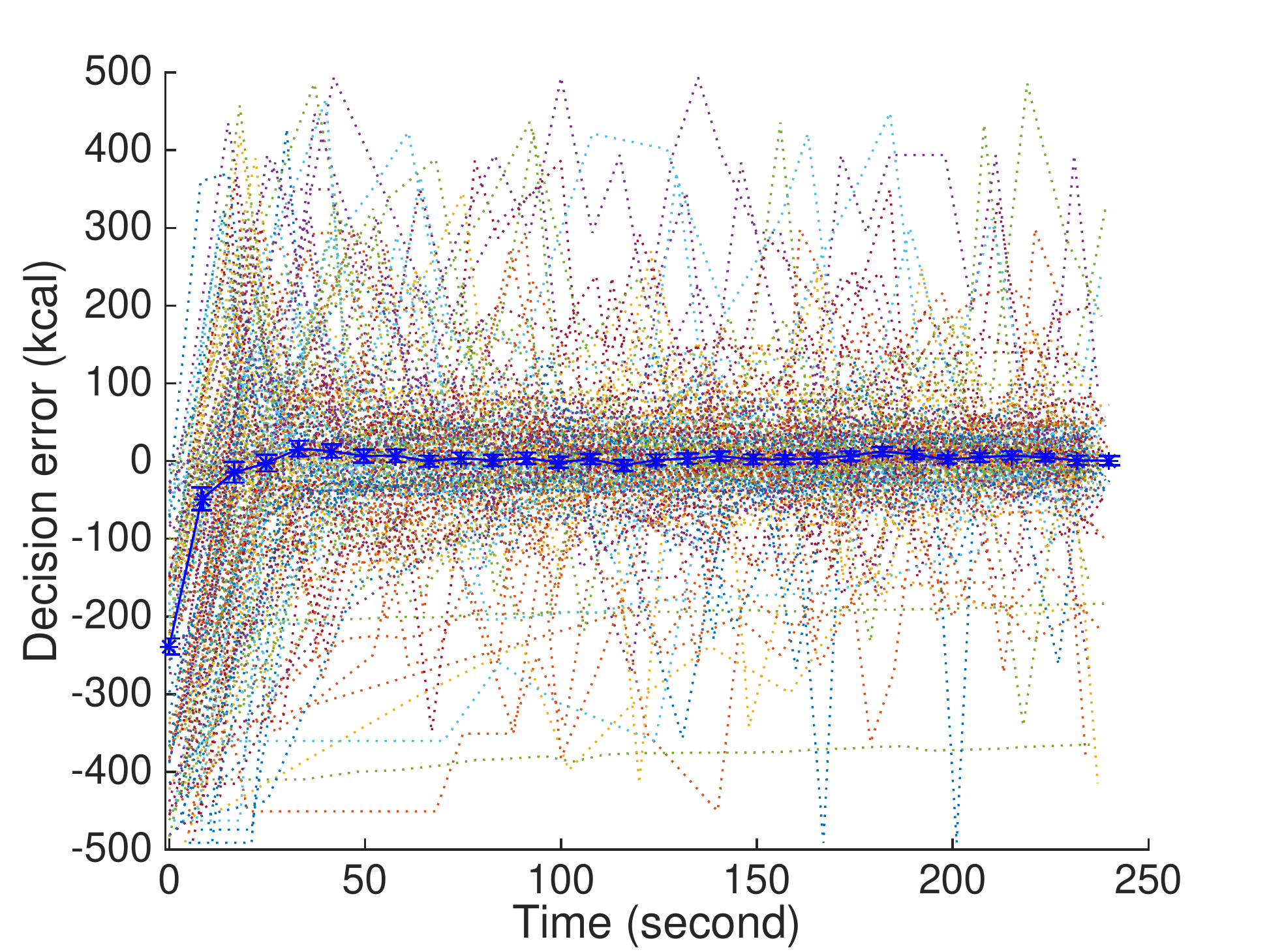}~
	\includegraphics[width=0.45\linewidth]{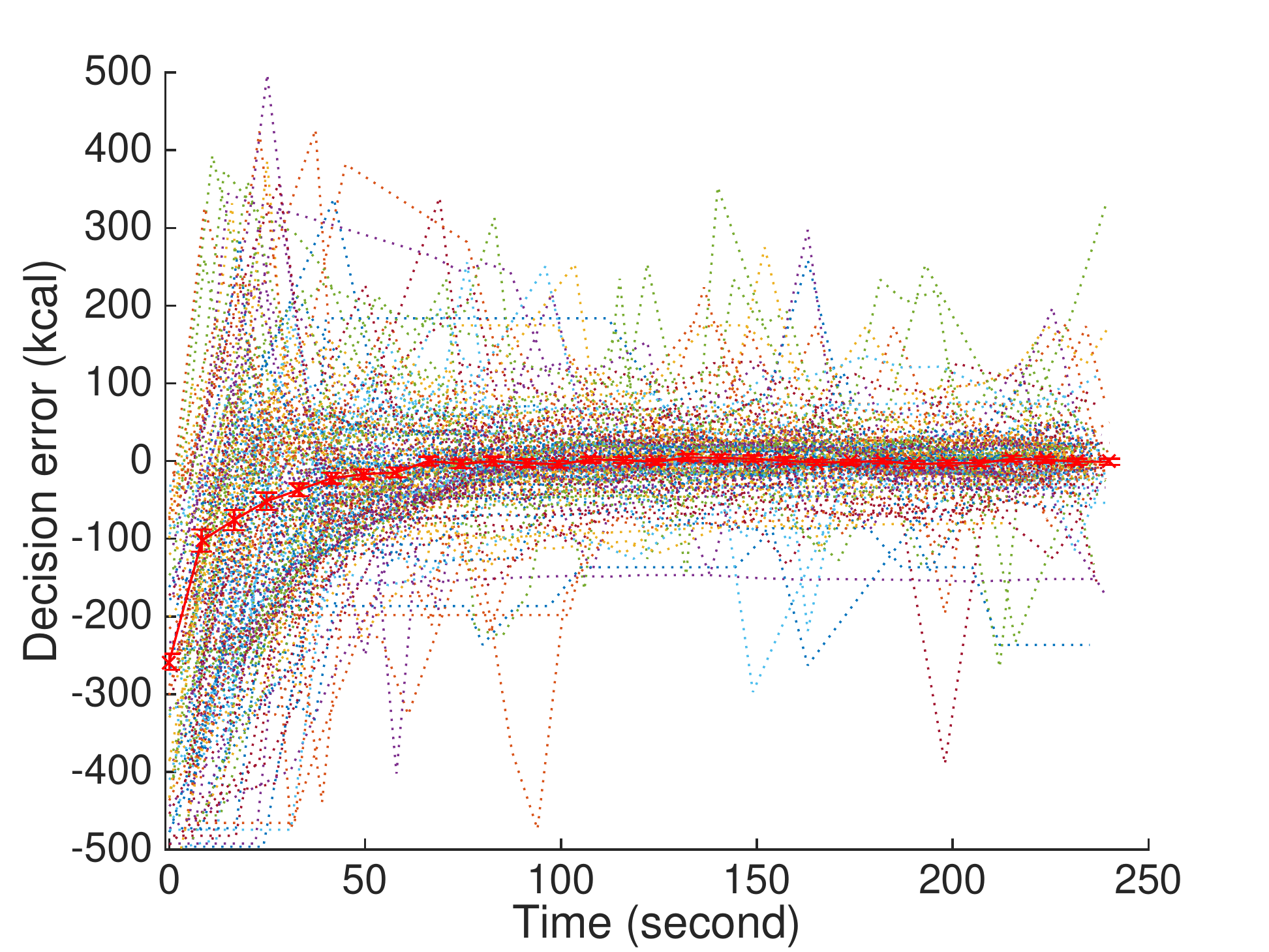}%\\
	\caption{Learning process of each individual player (time series
          of $x_i$) and the wisdom of crowds (time series of $u$)  (left:
          open loop setting, right: \sr{} setting). Each colored
          dashed line represents an individual participant's time series of
          decision error. The solid line is the arithmetic average of
          individual decision errors. Error bars reflect the standard
          errors of the mean.} 
	\label{fig:BasicData}	
      \end{figure*}

\subsection{Improvement from \SR{}}
Now, let's analyze %the performance of 
how \sr{} improves the crowd's learning performance.
%average accuracy of the players' estimates plotted
%in Figure~\ref{fig:AccuBasic}~(a). 
By visually inspecting Fig.~\ref{fig:BasicData}, we observe the narrowing
of individual error distribution in the \sr{} setting: There are
fewer extreme errors than those in the open loop setting; most guesses are
confined within $\pm100$ kcal around optimum. In contrast, there are a
significant number of players making completely off guesses ($\pm500$
kcal) in the open loop (even towards the end of sessions). 

In Fig.~\ref{fig:BasicSysid} we plot the MSE time series to quantitatively
assess the crowd's  
%collective learning efficiency
performance. The total MSE %(or the cost
% function $V$ in \eqref{eq:control1}) %,
% i.e.,~sum of MSE over time,  
is approximately $30$\% lower in the \sr{} setting than in the open loop setting.
% plots the MSE for both open loop (blue) and soft learning (red). The plot confirms the improvement from soft learning. 
%The average performance in soft
%regulation setting, i.e.,~the setting in which players receive the
%feedback of the population average, is clearly superior to open
%loop. 
Unlike the deterioration in the performance of wisdom of crowds,
here social influence %feedback 
%improves performance by improving 
improves convergence and reduces the effect of
noise. %(see~\eqref{eq:mse}). 
{\em The critical feature of \sr{} is that the players
can ignore the feedback}. Since self-interested individuals reject
feedbacks that appear unhelpful, the self-filtered social feedback
significantly improves performance.
%More accurately speaking, the total MSE is reduced by about 30\% in soft learning. 
%The total MSE, or the cost function $V$ in \eqref{eq:control1}, is reduced by 30\% in the soft learning setting.

The observed improvement from \sr{} indicates that, without
external interference, partially following the average opinion helped the
players solve the ``Fitness Game'' problems. In the next section, we will
characterize the system and estimate how much social influence was present
in the experiment, and the optimal degree of social influence that would
have optimized the crowd's performance. 

\begin{figure}[h!]
	\centering
	\includegraphics[width=\linewidth]{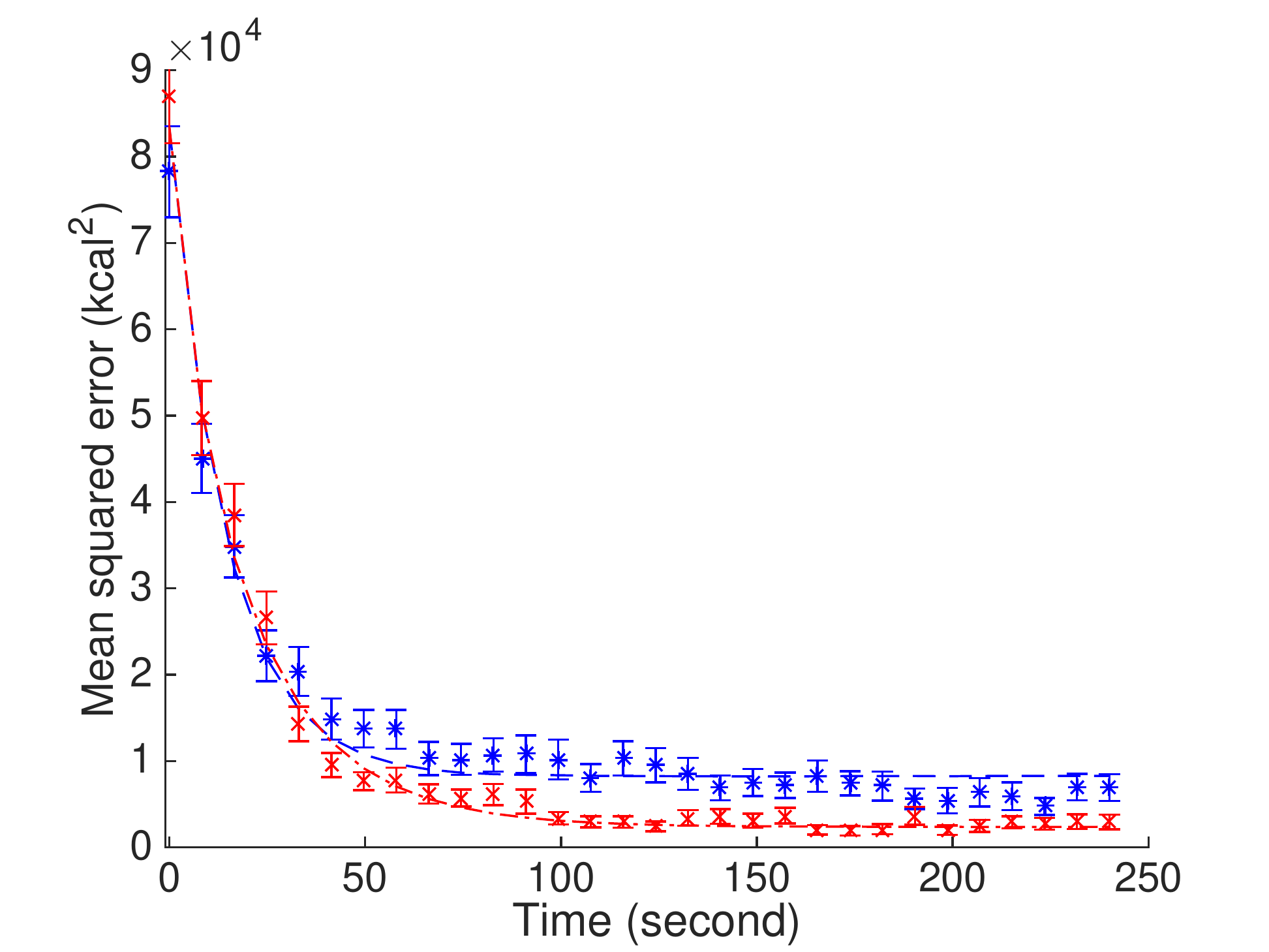}
	\caption{MSE progression. Blue (or red) dots are the MSE values
          sampled at different points in time ($T=30$) in the open loop
          (or \sr{}) setting. The dashed lines are simulation results
          based on models from system identification.} 
	\label{fig:BasicSysid}	
\end{figure}

\subsection{System Identification}
%To determine the optimal social influence for the learning crowd, we
%first need to identify the dynamical system. %Fig.~\ref{fig:BasicSysid}a  
%We considered a few approaches. 
%We considered a few approaches to identify the decision/learning dynamics of the participants. 
We assumed that %$g_i(x) \equiv g(x)$, where $g(x)$ is allowed to be at
$g_i(x)\equiv g(x)=\tilde{g}x$
%most a quadratic function of the state $x$, 
and the degree of social influence
$\beta_i \equiv \beta$. 
%identify the system and recover 
The estimate $\hat{g}(x) = 0.75x$ and
$\hat{\sigma}_\omega=60$ ($r^2=0.97$) was computed using the open loop
results. 
%% MATERIALS AND METHODS %%
From \eqref{eq:mse}, we first estimated $\tilde{g}$ and $\sigma_\omega$ by regressing $\operatorname{MSE}(t+1)$ against $\operatorname{MSE}(t)$. Using these estimates as an initial guess, we then ran a Monte Carlo simulation with 5000 samples and computed the average MSE time series. By minimizing the mean squared difference between that with the open loop MSE time series, we obtained the $\tilde{g}$ and $\sigma_\omega$ estimates.
%% MATERIALS AND METHODS %%
The corresponding MSE evolution is plotted in Fig.~\ref{fig:BasicSysid}.
% the % characteristic learning dynamics (
% $g(x)$ and the noise variance $\sigma_\omega$ using the open loop
% experimental results. The estimate $\hat{g}(x) = 0.75x$ and
% $\hat{\sigma}_\omega=60$   \label{fig:BasicSysid}	
% and social influence ($\beta$) using % a least mean square (LMS) and
% % Monte  
% % Carlo (MC) approach
% Monte Carlo simulation
  
%We, again, assume that all the players have identical learning functions
%described by~\eqref{eq:simpledynamics}, 
%i.e.,~$g_i(x) \equiv g(x)$, 
%and 
% We % first use
% used 
% the open loop MSE time series %from the ``Fitness Game''
% %experiment 
% %to %fit it as well as the standard deviation of noise.
% % and
% to 
% estimate %the characteristic learning dynamics to be 
% %a linear model for $g(x) = mx + b$. 
% %The LMS learning function is calculated to be 
% %. %We further simulate the model %and compute the average over 5000 samples. 
% %by sampling 5000 times and taking the average MSE results. 
% %The simulation fits the open loop MSE time  series 
% %with 
% ($r^2=0.97$, MC simulation in Fig.~\ref{fig:BasicSysid}, blue dashed line). 
% Next, we used the soft learning MSE time series to estimate the
% \emph{observed} 
The estimate $\hat{\beta} = 32\%$ ($r^2=0.99$) for the degree of social influence % present in this experiment
% . We assume
% the learning function and noise remain
% the same as open loop. The social
% influence value is estimated  
% to be $\beta=32\%$
% with 
was computed using the results where the players received the population
feedback. 
The corresponding MSE
evolution is plotted in 
% ($r^2=0.99$, MC simulation in
Fig.~\ref{fig:BasicSysid}. % , red dot-dashed line). 
Following the
studies \cite{soll2009strategies,moussaid2013social} that have established that people rely more on
themselves when the opinions of others are very
dissimilar, % we also assume
we computed an ``opinion distance'' function $\beta(d)$, where 
% that $\beta$  \emph{could} be function of the
$d=|g(x)-u|$ is the distance of an individual decision from the population feedback.
%between the self decision and the crowd recommendation. %We iterate between estimates of $g(x)$ and
%$\beta(\cdot)$ until they converge. See SI for more details on the
%estimation procedure. 
We found it to be % LMS estimation of the social influence profile is
$\hat{\beta}(d)=\exp(-0.011d)$ ($r^2=0.98$). % , simulation not shown
%We however will focus the analysis on the characteristic $\beta$ instead
%of the $\beta$ profile by following the principle of Occam's razor, for
%we have limited understanding about the actual opinion dynamics. 

%The estimated social influence profile %$\beta$ is plotted in
%%Figure~\ref{fig:BetaProfileBasic}. 
%follows a downward trend:
%The function decreases as a
%function of the opinion distance. One possible explanation for this
%profile is that players regard the recommendation as erroneous when
%the opinion distance is
%large~\cite{soll2009strategies,moussaid2013social}. A social influence
%profile that decreases with opinion distance has the merit that it
%prevents individuals from blindly following sub-optimal recommendations,
%and thereby, improves the accuracy of {\it vox populi}; however, % it
%% suffers from the limitation that 
%such conservative attitude towards the
%population average opinion slows overall consensus formation. 
%%{In Figure~\ref{fig:AccuProfileBasic} we plot the accuracy $\eta\big(x_i(t+1)\big)$ of
%%the future state as a function of the social influence averaged over all players, 
%%and time instants. We focus on three particular cases: $\beta = 0$,
%%i.e.,~explorers, $\beta = 100\%$, i.e.,~followers, and $0<\beta < 100\%$. We
%%observe that players who compromise have a higher accuracy at the next
%%time step. 
%%} 

\subsection{Optimal Degree of Social Influence}
%\begin{figure}
%	\centering
%%	\includegraphics[width=0.25\linewidth]{fig/BasicFig31}~
%%	\includegraphics[width=0.25\linewidth]{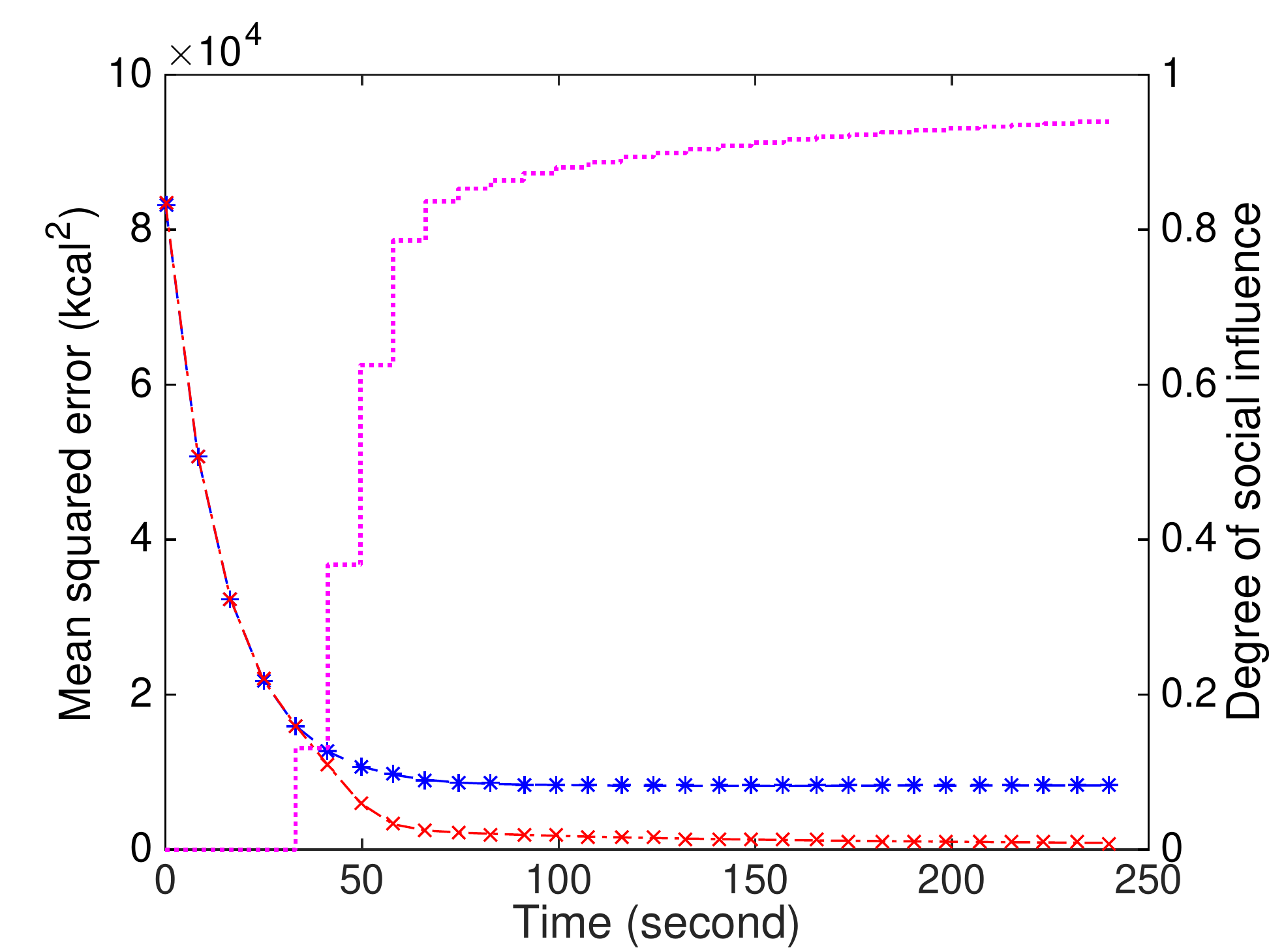}\\
%%	\includegraphics[width=0.25\linewidth]{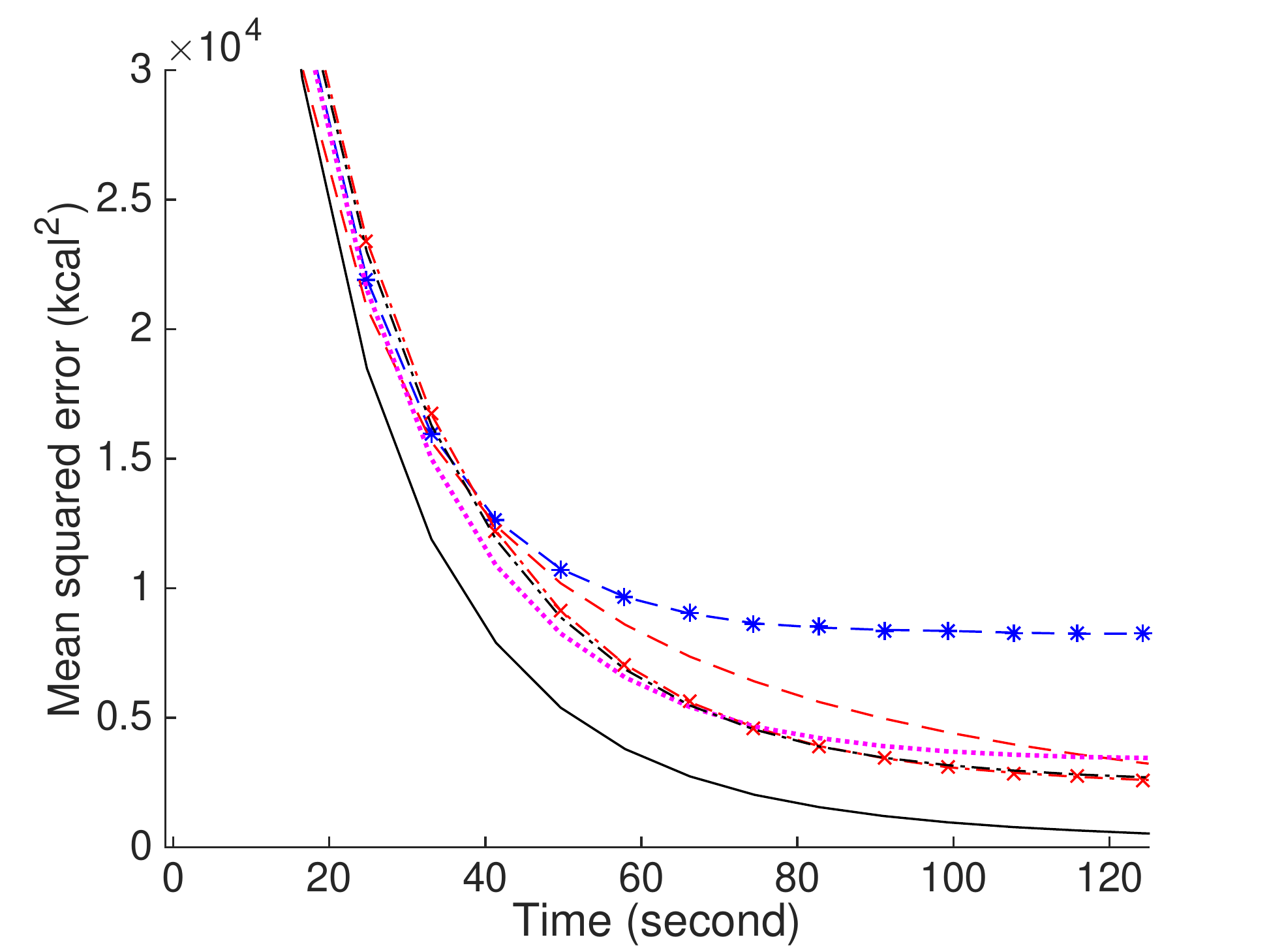}~
%%	\includegraphics[width=0.25\linewidth]{fig/BasicFig34}
%	\label{fig:BasicControl}
%	\caption{}
%\end{figure}
\begin{figure*}


	\centering
	\includegraphics[width=0.45\linewidth]{fig/BasicFig33}
	\includegraphics[width=0.45\linewidth]{fig/BasicFig32}
	\caption{Monte Carlo simulation of the expected MSE time series.
	%Optimal social influence (simulation). 
	Blue (or red) dashed line with $\ast$ (or $\times$) markers is the simulation of the open loop (or \sr{}) MSE. Left: Red dashed line is the simulation of the \sr{} MSE with estimate social influence profile $\hat{\beta}(d)$; magenta dotted line is the simulation of MSE with optimal degree of social influence $\beta^\ast_\text{R}$ through robust control; black dash-dot line is with true optimal  degree of social influence $\beta^\ast_\text{MC}$; black solid line is with true optimal social influence profile $\beta^\ast_\text{MC}(d)$. Right: Red dashed line with markers is with dynamic social influence $\beta^\ast_\text{R}(t)$; magenta dotted line is the dynamic social influence time series.}
	\label{fig:BasicControl}	
\end{figure*}

Given the estimates $\hat{g}(x)$ and $\hat{\sigma}_\omega$, % for  the open loop system, 
one can compute the optimal degree of social influence $\beta^\ast$ that, hypothetically,
would optimize the \sr{} performance. The results are summarized
in  Table~\ref{tab:optimal}, and the associated % Monte Carlo simulation  
% The MC simulation results of the expected 
MSE time series % based on various social influence values in
                % Table~\ref{tab:optimal} 
are displayed 
in Fig.~\ref{fig:BasicControl}. 
We first consider the case where the degree of social influence $\beta$ is fixed.
%Now we have characterized the learning crowd from experimental results. 
%We can now find the optimal social influence based on our control-theoretic model.
%(Table~\ref{tab:optimal} and Fig.~\ref{fig:BasicControl}): % by solving an optimal control problem. 
The empirical estimate $\hat{\beta}$  of social influence computed from  experiment data is % first
listed % in Table~\ref{tab:optimal} 
as a reference.
%results are obtained from previous system identification without optimization.
The \emph{robust}  social influence 
%type %(
${\beta}^\ast_\text{R}$ 
was calculated by minimizing the RHS in \eqref{eq:controlRobust},
i.e.,~optimizing the worst case cumulative expected
MSE. %$\beta^\ast_\text{R}$  
%The \emph{optimal and constant} type ($\beta^\ast$) 
The \emph{Monte Carlo} (MC) estimate ${\beta}^\ast_{\text{MC}}$
% optimization results were
was 
calculated % by minimizing the cumulative expected MSE in 
by minimizing the total MSE in 
\eqref{eq:control1} with the expectation approximated by a Monte Carlo estimate.  
% using MC simulation to estimate the expectation according to
% \eqref{eq:control2}. 
%with the MC simulation of the expectation. %The MC estimate is considered here the closet to the true optimal social influence.
We regard $\beta^\ast_{\rm MC}$ % are considered here 
as the true optimal degree of social influence.
 %and assuming that $\beta$ is constant and identical among
 %individuals. The last type, \emph{optimal and profile}, is calculated by
 %solving the same optimal control problem but assuming that $\beta$
 %follows a downward profile $\beta^\ast(d)$ as identified
 %earlier. Results are listed on Table~\ref{tab:optimal}.  
%using the 5000-sample simulation to approximate expected MSE.
%The optimal social influence also has several forms: \emph{constant}
%social influence, social influence \emph{profile}, and \emph{dynamic}
%social influence (i.e.,~minimizing the $t+1$-th MSE at time step $t$).  
%See Table~\ref{tab:optimal} for the complete optimal social influence
%results and Fig.~\ref{fig:BasicControl} for the corresponding MC
%simulations. 
In Table~\ref{tab:optimal}, the column labeled $\Delta$MSE lists the
decrease of the cumulative expected MSE from the open loop to  the \sr{} setting. 
%We use the reduction of cost function $V$ (i.e.,~$\Delta$MSE) from the open loop simulation 
% o quantify the impact of social influence.
% profiles from Table~\ref{tab:optimal}. 
The performances of the empirical estimate $\hat{\beta}$, the robust estimate
$\beta^\ast_{\text{R}}$, and the optimal value $\beta^\ast_{\text{MC}}$ are
quite close.  
% First, the performances of the characteristic social influence (observed, robust, and MC) are %quite 
%  similar. 
It is comforting to know that 
the social influence present in the experiment was close to the optimum. 
% soft learning (without external interference) reached the best possible improvement in the experiment.

%Recall that we had 
We expect the degree of social influence, a function
of the opinion distance or a function of time, to likely improve
convergence. The $\hat{\beta}(d)$ profile estimated from experimental data
results in $\Delta \text{MSE} = 30\%$, which is not distinguishable from
the performance of a constant $\beta$. However, the optimal $\beta$
profile ${\beta}^\ast_{\text{MC}}(d)$ with $\Delta \text{MSE} = 47\%$ is
significantly superior. The performance of the optimal dynamic robust
social influence $\beta^\ast_{\text{R}}(t)$ is also listed in
Table~\ref{tab:optimal}. 
Since we do not have evidence to suggest the subjects used a dynamic
value for $\beta$, and the performance of $\hat{\beta}(d)$ is close to
$\hat{\beta}$, we assume that the subjects used the constant $\hat{\beta}$ for
the rest of our results. 

% Second, the 
% %collective learning process 
% crowd's performance
% could have been further improved if individuals adopted either a social
% influence profile based on the opinion distance, or a dynamic social
% influence, i.e.,~adjusting $\beta$ at each time step by solving
% \eqref{eq:controlRobust}. 
% %First, performance of the optimal social influence (either by robust
% control or MC approach) is close to that of the observed constant social
% influence. Second, the optimal dynamic social influence and optimal
% social influence profile significantly further reduce the total MSE. 
% %We, however, will focus the analysis on the characteristic $\beta$
% instead of the $\beta$ profile by following the principle of Occam's
% razor, for we have limited understanding about the actual opinion
% dynamics. 
% Despite the obvious superiority in hypothetical performance, neither
% social influence profile nor dynamic social influence could be further
% verified. Following the principle of Occam's razor, we base the
% conclusions of the paper cautiously on the characteristic (constant)
% social influence results.  
% Please add the following required packages to your document preamble:
% \usepackage{booktabs}
\begin{table}[]
\centering
\caption{Optimal degree of social influence}
\label{tab:optimal}
\begin{adjustbox}{max width = \linewidth}
\begin{tabular}{@{}llc@{}}
\hline%\toprule
Type                 	& Value     								& $\Delta$MSE\\
\hline%\midrule
%Open loop (observed)	& $\beta=0;~g(x)=0.75x;~\sigma_\omega=60$ 	& 0\%\\
 $\beta$ (observed)		& $\hat{\beta}=32\%$ 								& 29\%\\
 $\beta$ (robust)     	& $\beta^\ast_\text{R}=23\%$    			& 27\%\\
 $\beta$ (MC, true optimum) 		& $\beta^\ast_\text{MC}=30\%$             & 29\%\\
$\beta$ profile (observed)		& $\hat{\beta}(d)=\exp(-0.011d)$					& 30\%\\
$\beta$ profile (MC, true optimum)  		& $\beta^\ast_\text{MC}(d)=\exp(-0.026d)$ & 47\%\\ 
Dynamic $\beta$ (robust) 		& $\beta^\ast_\text{R}(t)$ 					& 39\%\\
\hline%\bottomrule
\end{tabular}
\end{adjustbox}
\end{table}

\subsection{U.S. State Tax and Expenditure Case Study}
%\begin{figure}
%	\centering
%	\includegraphics[width=0.25\linewidth]{fig/TaxFig314}~
%	\includegraphics[width=0.25\linewidth]{fig/TaxFig325}\\
%	\includegraphics[width=0.25\linewidth]{fig/ExpFig3040}~
%	\includegraphics[width=0.25\linewidth]{fig/ExpFig3107}
%	\caption{Case Study}
%	\label{fig:TaxControl}	
%\end{figure}
Next, we apply 
% We extend the optimal 
this  
control-theoretic analysis to the state tax and expenditure case study. 
% We continue our earlier thought experiment and ask: Can we accelerate
% the convergence of %such process?  
% reaching the equilibrium tax and expenditure strategies?
%In this scenario, we treat the evolution of individual states' tax and
%expenditure strategies as the collective learning process.  
The results are displayed in Table~\ref{tab:all_results}. 
% By again minimizing the cumulative expected MSE, we find the optimal
% social influence such that the convergence could have been accelerated
% ().  
% We use the asymptotic value (average from the last five years) as a
% proxy for the optimum $\theta^\ast$. Then we carry out the same system
% identification and optimal control based on the open loop time series
% data. %We only report the optimal constant social influence values
% ($\beta^\ast_\text{MC}$).  
%All results are included on Table~\ref{tab:all_results}.
% There are a few observations and implications from the results: First, t
The learning gains of the states are all very close to $1$, i.e.,~in a
noiseless setting, the convergence is very slow.  
%This might be because  
A possible explanation is that
%There are two interpretations of these large learning gains: a) 
drastic change of tax and expenditure strategies is either prohibited or
discouranged. %b) The overall regional wellbeing is difficult to observe
% therefore appears to be insensitive to change of  
A larger noise (see e.g.,~T09 and E065) or a smaller learning gain (see
e.g.,~T20 and E65) calls for a larger optimal degree of social influence, which is
consistent with the results presented in  Fig.~\ref{fig:PhaseBasic}. The
improvement from \sr{} ranges from $14$\% to $73$\%. As
mentioned earlier, even a small improvement could make a significant
difference in the nation's overall welfare.  

% Please add the following required packages to your document preamble:
% \usepackage{booktabs}
\begin{table*}[]
\centering
\caption{All Results}
\label{tab:all_results}
\begin{adjustbox}{max width = \linewidth}
\begin{tabular}{@{}lccccccccc@{}}
\hline%\toprule
Description                  & Duration  & Crowd size ($n$) & Horizon ($T$) & Learning gain ($\hat{\tilde{g}}$) & Noise ($\hat{\sigma}_\omega$) & $r^2$ & Noise ratio & Optimal $\beta$ & $\Delta$MSE \\ \hline%\midrule
The Fitness Game (Set B)  & 0-240s    & 39               & 30            & 0.75                         & 60                      & 0.97  & 5\%         & 30\%            & 29\%        \\
The Fitness Game (Set N)  & 0-240s    & 41               & 30            & 0.7                          & 57                      & 0.98  & 4\%         & 32\%            & 25\%        \\
The Fitness Game (Set S)  & 0-240s    & 9                & 30            & 0.65                         & 51                      & 0.98  & 3\%         & 30\%            & 17\%        \\
Total Gen Sales Tax (T09)    & 1946-2014 & 50               & 69            & 0.96                         & 4                       & 0.89  & 3\%         & 35\%            & 73\%        \\
Total License Taxes (C118)        & 1946-2014 & 50               & 69            & 0.97                         & 0.82                    & 0.89  & 0.4\%       & 14\%            & 34\%        \\
Alcoholic Beverage Lic (T20) & 1946-2014 & 50               & 69            & 0.93                         & 0.04                    & 0.99  & 0.09\%      & 20\%            & 34\%        \\
Individual Income Tax (T40)  & 1946-2014 & 50               & 69            & 0.98                         & 2.9                     & 0.86  & 1\%         & 14\%            & 32\%        \\
Educ-NEC-Dir Expend (E037)        & 1977-2013 & 51               & 37            & 0.96                         & 0.097                   & 0.85  & 1\%         & 28\%            & 54\%        \\
Emp Sec Adm-Direct Exp (E040)      & 1977-2013 & 51               & 37            & 0.93                         & 0.037                   & 0.99  & 0.6\%       & 11\%            & 14\%        \\
Total Highways-Dir Exp (E065)      & 1977-2013 & 51               & 37            & 0.93                         & 0.76                    & 0.89  & 3\%         & 31\%            & 53\%        \\
Liquor Stores-Tot Exp  (E107)      & 1977-2013 & 51               & 37            & 0.95                         & 0.17                    & 0.95  & 1\%         & 42\%            & 67\%        \\ \hline%\bottomrule
\end{tabular}
\end{adjustbox}
\end{table*}

\section{Discussion}
There is a fundamental difference between %Galton's {\it vox populi} 
% wisdom of crowds
{\it vox populi}
and the 
\sr{} mechanism % (or 
% soft learning
proposed % described
in this paper. %The former gathers independent opinions based on prior knowledge and \emph{predicts}
Even though both come under the umbrella of ``collective intelligence,''
the {\it vox populi} aggregates the wisdom of \emph{experts} while the latter harnesses the wisdom of \emph{learners}. Experts %, regardless of their expertise levels, 
base their opinions on prior knowledge. Such knowledge comes from experience and beliefs, which are unlikely to change.
Independency and diversity of opinions prevent the ``groupthink'' behavior~---~undesirable convergence of individual estimates~\cite{sunstein2014wiser}. %From a surveyor's perspective, social influence is therefore regarded as toxic. %If the {\it vox populi} is available to the individual themselves
In this setting, social influence, which violates independency, %thus 
reduces the accuracy of the wisdom of crowds.
%To a surveyor, who aggregates individual opinions to make decisions or predictions, social influence is toxic and the wisdom of crowds deteriorates because of it.

Learners, on the other hand, {\em revise} their decisions by interacting with the problem as well as other learners. 
%relies on individual exploration as well as consensus. 
%Flocking birds %, for example, are a learning crowd.
%are a typical example of the wisdom of learners:
%not a \emph{knowledgeable} crowd. 
Consider, for example, flocking birds. %  instance, the environment %(weather, predators, food, etc.) 
% changes constantly for the flocking birds. Such uncertainty makes %any 
% prior knowledge of migration 
%route 
The birds have to adapt to changing weather; they
% useless. So instead, the birds
gather local information, follow their closest neighbors, and revise
directions constantly~\cite{reynolds1987}. In this collective learning
environment, individuals, like the flocking birds, are \emph{both}
respondents who generate new information, and surveyors who poll their
social networks to improve decisions. 

%Because of the fundamental difference, social influence, while undermining the wisdom of crowds effect, plays an important role in collective learning. %:
%We have established that social influence, despite being undesirable for
%the wisdom of experts, can benefit collective learning.  
%There are in general two effects: Strength of social influence alters
%It affects the rate of convergence and reduces the noise. %Neither too
%much nor too little is ideal for a learning crowd. %The control-theoretic
%analysis of the ``Fi tness Game'' experiment results indicates that by
%offering individuals the freedom to partially follow the average
It appears that a social influence degree of $30$\% % appears to be the magic
                                % number of social influence:
is robust across many different scenarios.
In Table~\ref{tab:all_results}, the optimal degree of social influence ranges from 30\% to 32\% for the ``Fitness Game'' experiment. %, and 11\% to 42\% for the state tax and expenditure case study (24\% on average).
%The control-theoretic analysis of the ``Fitness Game'' results indicates 30\%-32\% (Table~\ref{tab:all_results}) to be optimal 
%for subjects in the experiment.
%among those participants. %For the state tax and expenditure case study, the optimal social influence on average is around 25\%. 
%11\%-42\% have been calculated for the state tax and expenditure learning case study. 
Prior literature~\cite{soll2009strategies,harvey1997taking,lim1995judgemental,yaniv2004receiving,yaniv2000advice} also reports 30\% to be the commonly observed degree of social influence on average.
%The $\sim30\%$ value is also reported in other literature~\cite{soll2009strategies}. Whether such value is meaningful requires more careful investigation in the future. 
%Is it coincidental? If not, there might be research opportunities of understanding the principles behind such value.
Whether this value
 %30\% 
 is a mere coincidence requires further investigation. 

The self-interested filtering of the feedback % appears to
is key to ensuring the accuracy and efficiency of the \sr{} mechanism. Individuals % are
                                % expected to
will 
reject the feedbacks that appear useless.
The experimentally observed magnitude of \sr{} % regulation
%learning
is close to the theoretically predicted value for the 
optimal degree of social influence.  
This discovery suggests the promise of \sr{} for challenging real-world problems that require collective learning and action.

\appendices
% !TEX root =  sr2-main.tex

\section{The ``Fitness Game''}\label{app_game}
All experiments have been approved by the IRB of
Columbia University (Protocol Number: IRB-AAAQ2603). We developed the ``Fitness Game'' using Google Apps Script and conducted
the experiments on Amazon Mechanical Turk (AMT). All the data were stored in
Google Sheets.  Once the players accepted the task on
AMT, they  were first asked to % first
carefully read the game
instructions (see Fig.~\ref{fig:ScreenIntro}). The total  task duration
was ten minutes. The open loop (game level 
1) and \sr{} (game level 2) sessions lasted precisely four
minutes each. Players who wished to practice could enter the practice mode
(game level 0) any time before open loop session began. After completing
both open loop and \sr{} sessions, the players received a
message about compensation information. 

The interactive app (see Fig.~\ref{fig:ScreenInterface}) consists of the
following components: The upper left panel shows the number of attempted
guesses, the most recent guess, the fitness level, and the latest
score. The panel changes from red to green whenever the player earns one
point. In the \sr{} session, an additional message recommends
the current {\it vox populi} population feedback (see
Fig.~\ref{fig:ScreenInterface}, right). The 
upper right panel records latest game scores. The lower left scatter chart
plots the ten most recent entries (fitness versus diet). The lower right
line chart plots the fitness history of the ten most recent entries. 
\begin{figure*}
	\centering
	\includegraphics[width=0.8\linewidth]{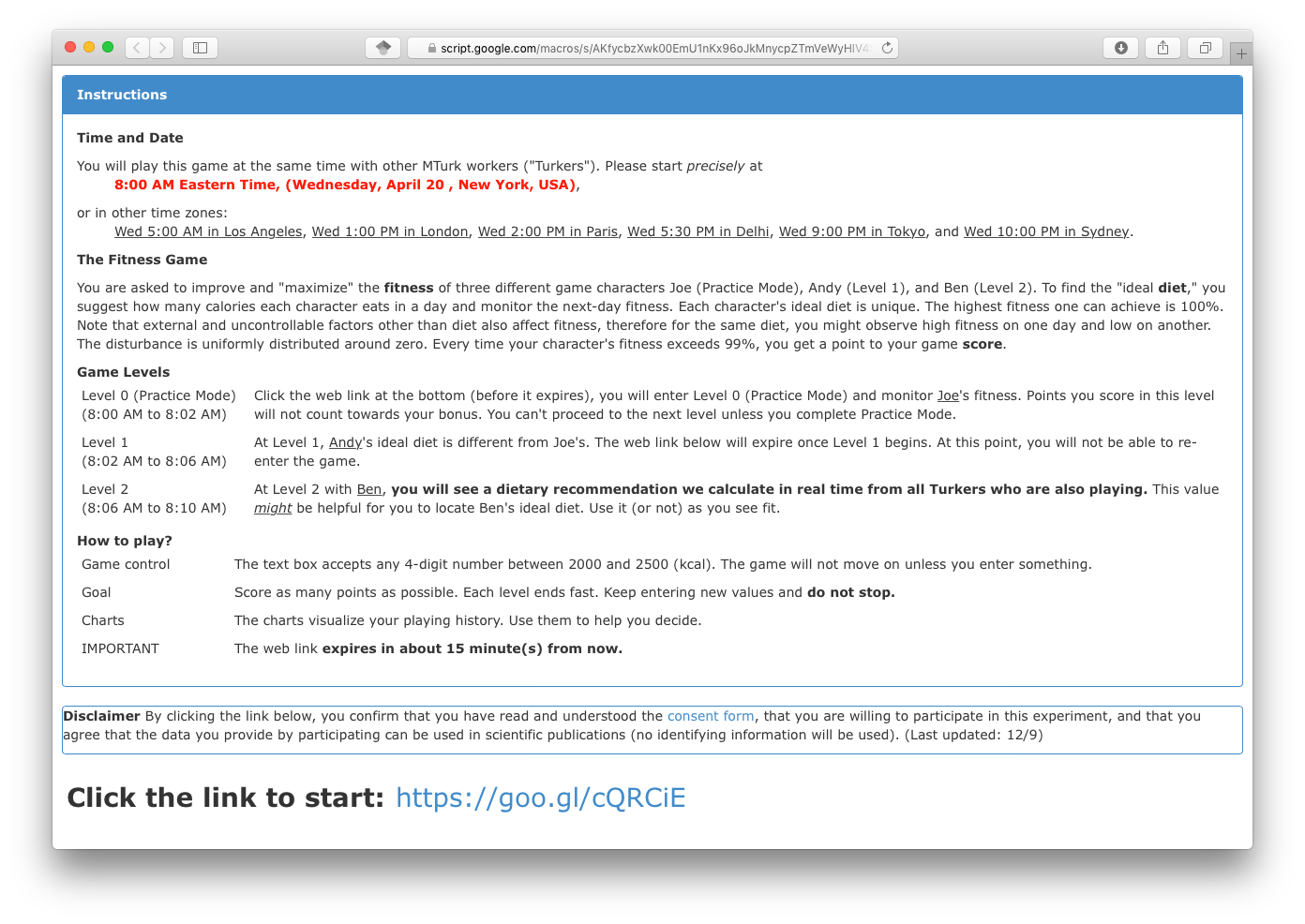}
	\caption{{Instructions.}}
	\label{fig:ScreenIntro}
\end{figure*}

\begin{figure*}
    \centering
%    \begin{subfigure}[b]{.75\textwidth}
 		\includegraphics[width=0.45\linewidth]{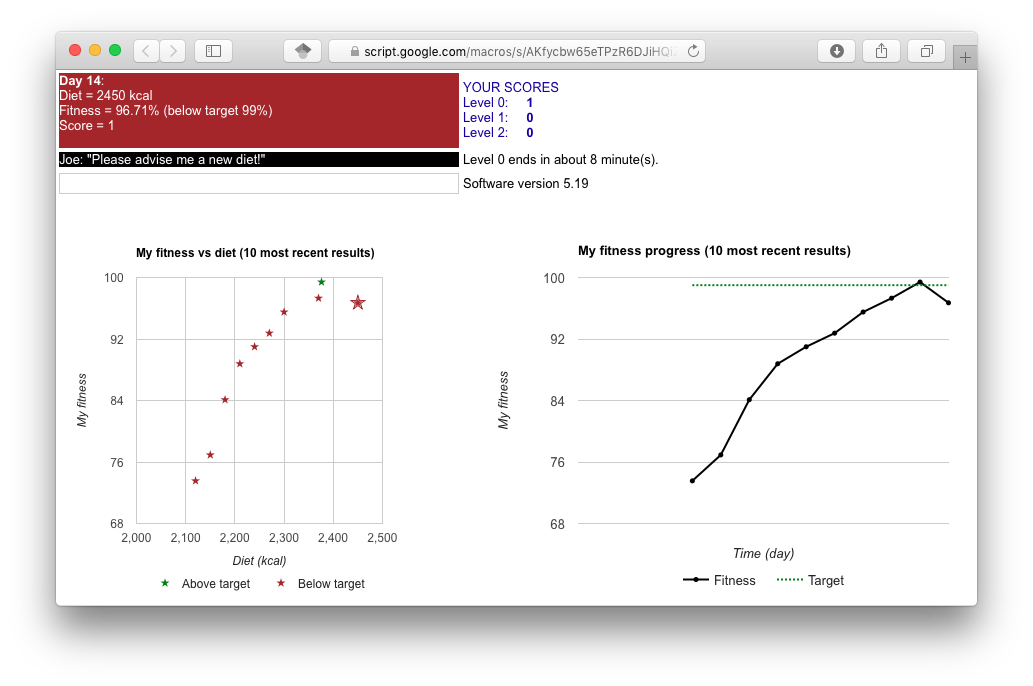}
	    %\caption{Open loop}\label{fig:ScreenOpen}
 %   \end{subfigure}
  %  \begin{subfigure}[b]{.75\textwidth}
    	\includegraphics[width=0.45\linewidth]{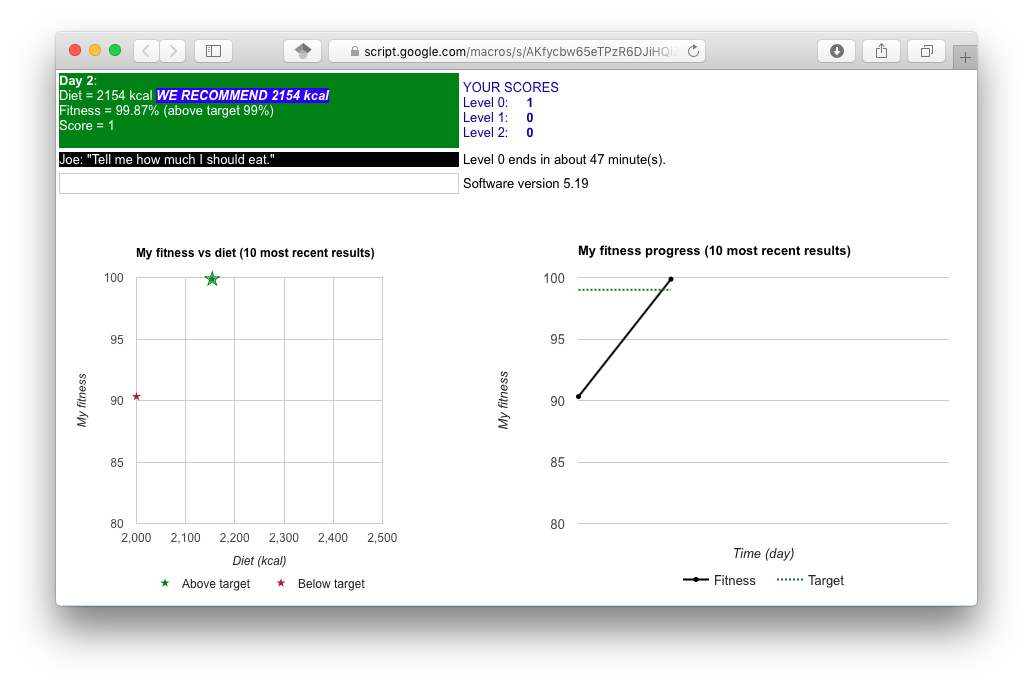}
	    %\caption{Soft regulation}\label{fig:ScreenSoft}
   % \end{subfigure}
    \caption{{ Game interface.}} 
    \label{fig:ScreenInterface}
    %(a) The average accuracy. (b) The accuracy of vox populi.}}
%\label{fig:AccuBasic} 
\end{figure*}

The virtual character's random fitness level $f(x)$ as a function of the
input $x=z-\theta^\ast$ was given by 
\begin{equation*}
f(x)=f_0-\Big(\frac{x}{\kappa}\Big)^2+\tilde{\omega},
\label{eq:fitness}
\end{equation*}
where $f_0=98\%$ is the maximum achievable fitness, $\kappa = 500~\text{kcal}$ is the scale of the fitness function, and $\tilde{\omega}$ is a sample from a random variable uniformly distributed over $[-2\%,2\%]$.
 %with $w=2\%$. 
 The player was awarded one score point whenever the guess led to a fitness level of 99\% or higher.

\section{Mathematical Model and Proofs}\label{app_math}
We first begin with the noiseless dynamics and then extend the model to
include noise. The noiseless dynamics for the $n$-player ``Fitness Game''
is as follows:
\begin{equation*}
x_i(t+1)=(1-\beta_i)g_i\big(x_i(t)\big)+\beta_iu(t)\label{eq:noise-less-dynamics}
,\end{equation*}
where $x_i(t)$ is the $i$-th player's state, i.e., deviation from
optimum $\theta^\ast$ at time $t$,  and restricted to belong to a bounded set
$\mathbb{X}\subseteq\mathbb{R}$, the learning function $g_i(\cdot)$ denotes the player's own
state update process, $\beta_i \in [0,1]$ (degree of social influence) % measures the distance
% between one's self-decision $g_i(x_i)$ and the 
is the weight player~$i$ puts on the %\emph{vox populi} 
soft feedback
$u(t)= \frac{1}{n}\sum_ix_i(t)$. % In the absence of
                                % noise, 
Note that in the paper, we refer to $\beta$ as percentage. The individual
learning functions $\{g_i(x): 1 \leq i \leq n\}$ are assumed to
satisfy the following regularity condition:\\
\begin{assumption}
  \label{ass:g-assumption}
  For all $i \in \{1, \ldots, n\}$, the function $g_i$ is differentiable,
  $x^\ast=0$ is the unique attracting fixed point of  
  % assume
  $g_i$, and furthermore,  % to have the optimal state as its unique  and
  % attracting fixed point which maximizes the fitness level. If
  $g_i$ is a contraction,~% for all $x\in \mathbb{X}$,
  i.e., %$\max_{x \in \mathbb{X}} |g_i'(x)|<1$.   Furthermore, 
  $|g_i'(x)|<1$
  for all $1\leq i\leq n$ and $x\in\mathbb{X}$. 
\end{assumption}
% We assume that $g_i$ is
% differentiable,%  $x^\ast=0$ is the unique attracting fixed point of 
% % % assume
% % $g_i$, and furthermore,  % to have the optimal state as its unique  and
% % % attracting fixed point which maximizes the fitness level. If
% % and is a contraction for all $x\in \mathbb{X}$,
% % i.e. $\max_{x \in \mathbb{X}} |g_i'(x)|<1$. 
This assumption is
motivated by the fact that all players converged to the optimal point in
the open loop setting independent of the starting guess.

Let $\mathbf{x}\equiv[x_1,\dots,x_n]^\top\in\mathbb{X}^n$ denote the state
vector for the $n$ agents. % The system-wide nominal 
% (no noise)
The \sr{} map for the vector $\mathbf{x}$ is given by $\bx(t+1)
= \mathbf{h}\big(\bx(t)\big)$ where the map
\begin{eqnarray*}
\mathbf{h}(\bx) &=& 
\begin{bmatrix}
(1-\beta_1)g_1(x_1)&\dots&0\\
\vdots&\ddots&\vdots\\
0&\dots&(1-\beta_n)g_n(x_n)
\end{bmatrix}
+\\ 
& & \frac{1}{n}\begin{bmatrix}
  \beta_1 \\
  \vdots\\
  \beta_n
\end{bmatrix}\mathbf{1}^\top \bx
.\end{eqnarray*}
% soft
% regulation map of the $n$-agent system is
% $\mathbf{x}^+=\mathbf{h}(\mathbf{x})$. The optimal state $\theta^\ast=0$ should be the unique and
% attracting fixed point of each $g_i$ map.  
% We first show that soft regulation does not alter the contraction property
% of the open loop process whenever $0\leq\beta_i<1$.  
% Therefore, if the system is open loop optimal i.e., for all $1 \leq i \leq
% n$, the  dynamical system $x_i(t+1) =
% g\big(x_i(t)\big)$ converges to $x^\ast = 0$ from all initi
We first show that the state vector $\bx(t)$ converges to $\bx^\ast =
\mathbf{0}$ if the functions $\{g_i(x): 1\leq i \leq n\}$ satisfy
Assumption~\ref{ass:g-assumption} and $0 \leq \max_i \beta_i < 1$.\\

% Therefore, if the open loop
% system is optimal, so does soft regulation. We then show that the optimal
% state is stable and robust when subjected to bounded disturbances because
% there exists a continuous and smooth Lyapunov function for $\mathbf{h}$.  
\begin{theorem}
The spectral radius $\rho\big(J(\mathbf{x})\big)$ of the Jacobian matrix of the
\sr{} map $\mathbf{h}(\mathbf{x})$ % is at most 
satisfies 
$\rho\big(J(\bx)\big) \leq m=\max_{1\leq i\leq
  n,x\in\mathbb{X}}\Big\{(1-\beta_i)|g_i'(x)|+\beta_i\Big\} < 1$.  
\end{theorem}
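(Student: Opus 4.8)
The plan is to work directly with the Jacobian of the soft feedback map $\mathbf{h}$ and to bound its spectral radius by a conveniently chosen induced matrix norm. First I would compute $J(\mathbf{x})$ componentwise. Writing the $i$-th coordinate as $h_i(\mathbf{x}) = (1-\beta_i)g_i(x_i) + \frac{\beta_i}{n}\sum_{j=1}^n x_j$ and differentiating, the diagonal entries are $J_{ii} = (1-\beta_i)g_i'(x_i) + \frac{\beta_i}{n}$ and the off-diagonal entries are $J_{ij} = \frac{\beta_i}{n}$ for $j \neq i$. In the compact notation already introduced in the main text this reads
\begin{equation*}
J(\mathbf{x}) = (I - B)G'(\mathbf{x}) + BS,
\end{equation*}
with $B = \operatorname{diag}(\beta_1,\dots,\beta_n)$, $G'(\mathbf{x}) = \operatorname{diag}\big(g_1'(x_1),\dots,g_n'(x_n)\big)$, and $S = \frac{1}{n}\mathbf{1}\mathbf{1}^\top$, mirroring the matrix whose largest eigenvalue is claimed to lie below $1$.

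The key tool is that the spectral radius is dominated by any induced norm, and the maximum absolute row sum (the $\infty$-norm) is the natural choice here because the off-diagonal entries within each row are identical. For row $i$, the triangle inequality applied to the diagonal term (valid since $0 \le \beta_i \le 1$, so $1-\beta_i \ge 0$) gives
\begin{equation*}
\sum_{j=1}^n |J_{ij}| \le (1-\beta_i)|g_i'(x_i)| + \tfrac{\beta_i}{n} + (n-1)\tfrac{\beta_i}{n} = (1-\beta_i)|g_i'(x_i)| + \beta_i \le m.
\end{equation*}
Maximizing over rows yields $\norm{J(\mathbf{x})}_\infty \le m$, hence $\rho\big(J(\mathbf{x})\big) \le \norm{J(\mathbf{x})}_\infty \le m$. (Gershgorin's circle theorem produces the same bound: each disc is centered at $J_{ii}$ with radius $(n-1)\beta_i/n$, and its farthest point from the origin is again at most $(1-\beta_i)|g_i'(x_i)| + \beta_i$.)

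Finally I would verify the strict inequality $m < 1$. For each fixed $i$ and $x$, Assumption~\ref{ass:g-assumption} gives $|g_i'(x)| < 1$, so $(1-\beta_i)|g_i'(x)| + \beta_i < (1-\beta_i) + \beta_i = 1$, where strictness relies on $1-\beta_i > 0$, i.e.~$\max_i \beta_i < 1$. The one point demanding care — and essentially the only obstacle — is passing from this pointwise strict inequality to $m < 1$ after taking the supremum over the bounded set $\mathbb{X}$: a supremum of quantities each strictly below $1$ need not itself be below $1$ unless it is attained or bounded away from $1$. I would resolve this either by invoking compactness of the (closure of the) state set so that the maximum is attained, or by strengthening the contraction hypothesis to a uniform bound $\sup_{x \in \mathbb{X}}|g_i'(x)| \le L_i < 1$; under either assumption $m < 1$ is immediate, completing the argument.
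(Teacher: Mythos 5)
Your proof is correct and follows essentially the same route as the paper's: bound the spectral radius by the induced $\infty$-norm of the Jacobian, whose $i$-th absolute row sum is exactly $(1-\beta_i)|g_i'(x_i)|+\beta_i$, and then pass to the maximum over rows and over $x\in\mathbb{X}$. Your extra care about whether the supremum over $\mathbb{X}$ of quantities each strictly below $1$ is itself strictly below $1$ is a genuine refinement that the paper dismisses with ``it is easy to see,'' and either of your proposed fixes (compactness of the closure of $\mathbb{X}$ or a uniform contraction constant $L_i<1$) closes that small gap.
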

\begin{proof}
 The Jacobian of $\mathbf{h}(\bx)$ is 
\begin{eqnarray*}
J(\mathbf{x}) &=&
\operatorname{diag}\big((1-\beta_1)g_1'(x_1),\dots,(1-\beta_n)g_n'(x_n)\big) 
+ \\
&&\frac{1}{n} \begin{bmatrix}
  \beta_1 \\ 
  \vdots\\
  \beta_n
\end{bmatrix} \mathbf{1}^\top
.\end{eqnarray*}
% \begin{bmatrix}
% (1-\beta_1)g_1'(x_1)&\dots&0\\
% \vdots&\ddots&\vdots\\
% 0&\dots&(1-\beta_n)g_n'(x_n)
% \end{bmatrix}
% +
% \frac{1}{n}\begin{bmatrix}
% \beta_1&\dots&\beta_1\\
% \vdots&\ddots&\vdots\\
% \beta_n&\dots&\beta_n
% \end{bmatrix}
% \label{eq:jacobian}
% \end{equation*} 
% i.e.,~$J=D+\mathbf{u}\mathbf{b}^T$ where $D=$, $\mathbf{u}=[1,\dots,1]^\top$, and $\mathbf{b}=[\beta_1/n,\dots,\beta_n/n]^\top$. 
% Note that we drop $x_i$ from $g_i'(x_i)$ for simplicity. 
The induced $\infty$-norm $\Vert J(\mathbf{x})\Vert_\infty$ of the
Jacobian $J$ % is given by 
satisfies
\begin{eqnarray*}
\Vert
J(\mathbf{x})\Vert_\infty   
&=& \max_{\Vert
\mathbf{v}\Vert_\infty = 1} \Vert
J(\mathbf{x}) \mathbf{v}\Vert_\infty, \\
  &  = &  \max_{\Vert
\mathbf{v}\Vert_{\infty} = 1} \max_{1\leq i\leq
  n} |J_i(\bx) \mathbf{v}|,\\
  & = & \max_{\Vert
\mathbf{v}\Vert_\infty = 1} \max_{1\leq i\leq
  n}  \Big[(1-\beta_i)|g_i'(x_i)|v_i+\\
  &&\frac{1}{n}\beta_i
(\mathbf{1}^\top \mathbf{v}) \Big],\\ 
& \leq & m(\bx),
\end{eqnarray*}
where $J_i(\bx)$ denotes the $i$-th row of the Jacobian $J(\bx)$. 
The result follows from noting that $\rho\big(J(\mathbf{x})\big) \leq \Vert
J(\mathbf{x})\Vert_\infty = m(\bx)$. It is easy to see that $m(\bx) < 1$
whenever $\max_i \beta_i < 1$.
\end{proof}
This result immediately implies that $\bx^\ast = 0$ is an asymptotically
stable fixed point of the map~$\mathbf{h}(\bx)$.\\

% This proof implies under Assumption~\ref{ass:g-assumption} the soft
% regulation map $\mathbf{h}$ also contracts.  
% \begin{theorem}
% The fixed point of the nominal soft regulation map $\mathbf{h}$ is asymptotically stable if $0\leq\beta_i<1$. 
% \end{theorem}
% \begin{proof}
% Since $m < 1$ for all $0\leq\beta_i<1$. The fixed point $\mathbf{x}^\ast=[\theta^\ast,\dots,\theta^\ast]^\top$ of $\mathbf{h}$ is asymptotically stable because $\rho(J(\mathbf{x})) \leq m < 1$.
% \end{proof}
% This proof is also applicable for time-varying $g_i$. As long as
% $g_i(x,t)$ contracts for $t>T$ and $T<\infty$, the optimality result
% will hold. In reality, the optimization map $g_i$ is subjected to noise
% because of the noisy utility $f_i$.  

\begin{theorem}
The fixed point $\mathbf{x}^\ast  = \mathbf{0}$ of the map $\mathbf{h}$ is robust
when subjected to bounded disturbances. 
\end{theorem}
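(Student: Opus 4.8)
The plan is to read ``robust'' in the input-to-state (bounded-input, bounded-state) sense: when the noiseless map $\mathbf{h}$ is driven by an additive bounded disturbance, the state must stay bounded and in fact settle into a neighborhood of $\mathbf{x}^\ast=\mathbf{0}$ whose radius shrinks with the disturbance bound. The engine for this is Theorem~1, which gives a \emph{uniform} bound $m<1$ on the induced $\infty$-norm of the Jacobian $J(\bx)$ over the whole domain. My first step is to upgrade this pointwise derivative bound into a global Lipschitz estimate for $\mathbf{h}$ itself: since $\norm{J(\bx)}_\infty \le m$ for every $\bx$, the mean value inequality along the segment joining any two states yields $\norm{\mathbf{h}(\bx)-\mathbf{h}(\bv)}_\infty \le m\,\norm{\bx-\bv}_\infty$. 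Combined with $\mathbf{h}(\mathbf{0})=\mathbf{0}$ (which holds because $g_i(0)=0$ and the feedback $u$ vanishes at the origin), this gives the key one-step estimate $\norm{\mathbf{h}(\bx)}_\infty \le m\,\norm{\bx}_\infty$.

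Next I would write the perturbed dynamics as $\bx(t+1)=\mathbf{h}\big(\bx(t)\big)+(I-B)\bfomega(t)$, with the disturbance bounded by $\norm{\bfomega(t)}_\infty \le W$. Since $0\le\beta_i<1$, the factor $(I-B)$ is diagonal with entries in $(0,1]$, so $\norm{(I-B)\bfomega(t)}_\infty \le W$. Applying the contraction estimate then gives the scalar recursion
\[
\norm{\bx(t+1)}_\infty \le m\,\norm{\bx(t)}_\infty + W .
\]
Unrolling this affine recursion and summing the resulting geometric series produces
\[
\norm{\bx(t)}_\infty \le m^{t}\,\norm{\bx(0)}_\infty + \frac{1-m^{t}}{1-m}\,W ,
\]
so that $\limsup_{t\to\infty}\norm{\bx(t)}_\infty \le W/(1-m)$. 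Because $m<1$, the transient decays geometrically and the ultimate bound is finite and proportional to the disturbance level, which is exactly the robustness claim; setting $W=0$ recovers the noiseless convergence established above.

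The step I expect to be the main obstacle is the domain bookkeeping. The Lipschitz upgrade relies on the segment between two states lying in the region where the bound $m<1$ holds, and, more importantly, the whole argument presumes the trajectory never leaves the bounded set $\mathbb{X}^n$ on which each $g_i$ is a contraction. I would close this gap by showing that the ultimate-bound ball $\{\bx:\norm{\bx}_\infty\le W/(1-m)\}$ is forward invariant under the perturbed map, and by requiring the disturbance bound $W$ to be small enough that this ball, together with the initial condition, sits inside $\mathbb{X}^n$; the one-step estimate above makes this invariance immediate. A secondary point to state carefully is that $m=\max_{i,x}\{(1-\beta_i)|g_i'(x)|+\beta_i\}$ is attained and strictly below $1$, which Theorem~1 already guarantees, so no additional compactness argument is needed beyond noting that $\mathbb{X}$ is bounded.
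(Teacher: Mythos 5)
Your proposal is correct, and its core engine is the same as the paper's: both arguments reduce the theorem to the one-step contraction estimate $\norm{\mathbf{h}(\bx)}_\infty \le m\,\norm{\bx}_\infty$ with $m<1$, obtained from $\mathbf{h}(\mathbf{0})=\mathbf{0}$, the mean value theorem, and the uniform $\infty$-norm bound on the Jacobian from the preceding theorem. Where you diverge is in how the robustness conclusion is extracted from that estimate. The paper stops at identifying $V(\bx)=\norm{\bx}_\infty$ as a Lyapunov function satisfying $V\big(\mathbf{h}(\bx)\big) < m\,V(\bx)$ and then invokes ``standard results in stability theory'' (a citation to discrete-time stability literature) to conclude robustness under bounded disturbances. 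You instead carry the argument to completion: you write the perturbed recursion $\bx(t+1)=\mathbf{h}\big(\bx(t)\big)+(I-B)\bfomega(t)$, unroll the resulting affine inequality, and obtain the explicit input-to-state bound $\norm{\bx(t)}_\infty \le m^t\norm{\bx(0)}_\infty + \frac{1-m^t}{1-m}W$ with ultimate bound $W/(1-m)$. This buys a quantitative, self-contained statement of what ``robust'' means here, and your attention to the domain bookkeeping --- forward invariance of the ultimate-bound ball and the requirement that trajectories remain in $\mathbb{X}^n$ where the contraction bound is valid (with $\mathbb{X}$ an interval so the mean-value segment stays in the domain) --- addresses a point the paper glosses over entirely. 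The paper's route is shorter and defers the standard machinery to a reference; yours is more elementary and makes the disturbance-to-state gain explicit, which is arguably more informative given that the same geometric-series bound reappears later in the paper's MSE analysis.
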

\begin{proof}
Let $V(\mathbf{x})=\Vert\mathbf{x}\Vert_{\infty}$. %  i.e.,~the
% $\infty$-norm between the current state $\mathbf{x}$ and the optimal state
% $\mathbf{x}^\ast$.
% Since $0 \leq m < 1$, we have that  $(1-m)
% \Vert\mathbf{x}-\mathbf{x}^\ast\Vert_\infty \leq
% V(\mathbf{x})\leq(1+m)\Vert\mathbf{x}-\mathbf{x}^\ast\Vert_\infty$. According
Since $\mathbf{h}(\mathbf{0}) = \mathbf{0}$, the mean value theorem implies that 
\[
\mathbf{h}(\bx) = \begin{bmatrix}
  J_1(\delta_1 \bx)\\ 
  \vdots\\
  J_n(\delta_n \bx) 
\end{bmatrix}
\bx,
\]
for some $\delta_i \in [0,1]$, $i = 1, \ldots, n$, and $J_i(\delta_i \bx)$
denotes the $i$-th row of the Jacobian of 
$\mathbf{h}(\delta_i \bx)$.
% to mean value theorem for vector-valued function, $\Vert
% \mathbf{h}(\mathbf{x})-\mathbf{h}(\mathbf{x}^\ast)\Vert_\infty$ is bounded by
% $m\Vert\mathbf{x}-\mathbf{x}^\ast\Vert_\infty$ where $m$ is the matrix
% norm upper bound of the Jacobian for $\mathbf{h}$. Therefore, we have  
% \begin{align*} 
% V(\mathbf{h}(\mathbf{x}))-V(\mathbf{x})&=\Vert
%                                 \mathbf{h}(\mathbf{x})-\mathbf{x}^\ast\Vert_\infty-\Vert\mathbf{x}-\mathbf{x}^\ast\Vert_\infty\\ 
% &=\Vert \mathbf{h}(\mathbf{x})-\mathbf{h}(\mathbf{x}^\ast)\Vert_\infty-\Vert\mathbf{x}-\mathbf{x}^\ast\Vert_\infty\\
% &\leq(m-1)\Vert\mathbf{x}-\mathbf{x}^\ast\Vert_\infty
% .\end{align*}
Thus, 
\begin{align*}
V\big(\mathbf{h}(\bx)\big) &= \Vert \mathbf{h}(\bx) \Vert_{\infty},\\ 
&= 
\max_{1 \leq i \leq n} |J_i(\delta_i \bx) \bx|, \\
&\leq \Big(\max_{1 \leq i \leq n}
\Vert J(\delta_i \bx)\Vert_{\infty} \Big) \Vert \bx \Vert_{\infty},\\ 
&< m \Vert \bx \Vert_{\infty},
\end{align*}
where the first inequality follows from the definition of $\Vert J(\delta_i
\bx)\Vert_{\infty}$. 

Since the  continuous function $V(\mathbf{x})$ is a Lyapunov function for
$\mathbf{h}$,  % According to
the result follows from standard results in 
stability theory~\cite{teel2004discrete}. % the optimal
% state is robust when subjected to bounded disturbances and can be
% restored.  
\end{proof}

% We measure the performance of soft regulation using mean squared error ($\operatorname{MSE}$) defined as follows:
% \begin{equation*}
% \operatorname{MSE}=\frac{1}{n}\sum_ix_i^2=\frac{1}{n}\Vert\mathbf{x}-\mathbf{x}^\ast\Vert_2^2
% .\end{equation*} 

In the rest of this section, we will assume that $\beta_i$ are identically
equal to $\beta$.\\
\begin{theorem}
  \label{thm:Hx-bnd}
  Suppose $\beta_i$ are all identically equal to $\beta$. Then %  the induced
  % $\ell_2$-norm $\norm{J(\bx)}_2$ of the  % the largest singular value of the
  % Jacobian $J(\mathbf{x})$ of $\mathbf{h}$ satisfies $ \norm{J(\bx)}_2
  $\norm{\mathbf{h}(\bx)}_2 \leq m \norm{\bx}_2$, where $m =(1-\beta) \max_{1 \leq i \leq n, x \in \mathbb{X}} |g_i'(x)| +\beta$. % where $g_m=\max_{1\leq i}|g_i'|$. 
  % Furthermore, $\norm{\mathbf{h}(\bx)}_2 \leq m \norm{\bx}_2$.
\end{theorem}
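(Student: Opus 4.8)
The plan is to exploit the additive structure of $\mathbf{h}$ together with the triangle inequality for the Euclidean norm, treating the nonlinear diagonal part and the linear averaging part separately. With all $\beta_i = \beta$, the map decomposes as $\mathbf{h}(\bx) = (1-\beta)\mathbf{g}(\bx) + \beta S\bx$, where $\mathbf{g}(\bx) = [g_1(x_1),\dots,g_n(x_n)]^\top$ collects the individual learning functions and $S = \frac{1}{n}\ones\ones^\top$ is the averaging operator appearing in the vector form of the dynamics. The triangle inequality then gives $\norm{\mathbf{h}(\bx)}_2 \le (1-\beta)\norm{\mathbf{g}(\bx)}_2 + \beta\norm{S\bx}_2$, so it suffices to bound each summand by a multiple of $\norm{\bx}_2$.

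For the nonlinear term I cannot simply invoke a matrix norm, since $\mathbf{g}$ is nonlinear; instead I would bound it componentwise. Because $g_i(0)=0$ under Assumption~\ref{ass:g-assumption}, the mean value theorem yields $g_i(x_i) = g_i'(\delta_i x_i)\,x_i$ for some $\delta_i \in [0,1]$, whence $|g_i(x_i)| \le L\,|x_i|$ with $L = \max_{1\le i \le n,\,x\in\mathbb{X}}|g_i'(x)|$. Squaring and summing over $i$ then gives $\norm{\mathbf{g}(\bx)}_2^2 = \sum_i |g_i(x_i)|^2 \le L^2 \sum_i |x_i|^2 = L^2\norm{\bx}_2^2$, so that $\norm{\mathbf{g}(\bx)}_2 \le L\norm{\bx}_2$.

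For the linear averaging term I would observe that $S$ is symmetric and idempotent ($S^2 = S$, since $\ones^\top\ones = n$), i.e., it is the orthogonal projection onto $\operatorname{span}(\ones)$. Its eigenvalues are therefore $0$ and $1$, so its spectral norm is exactly $1$ and $\norm{S\bx}_2 \le \norm{\bx}_2$. Combining the two estimates yields $\norm{\mathbf{h}(\bx)}_2 \le \big[(1-\beta)L + \beta\big]\norm{\bx}_2 = m\norm{\bx}_2$, as claimed.

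I expect the only genuinely delicate point to be the handling of the nonlinearity: one must resist bounding $\norm{\mathbf{g}(\bx)}_2$ directly by an operator norm and instead pass through the componentwise mean-value estimate before reassembling the Euclidean norm. Everything else — the decomposition, the triangle inequality, and the spectral-norm computation for the projection $S$ — is routine, and the result is the $2$-norm analogue of the earlier $\infty$-norm contraction estimate.
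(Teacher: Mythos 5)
Your proof is correct, and it takes a genuinely different route from the paper's. The paper first linearizes the whole map via the mean value theorem, writing $\mathbf{h}(\bx) = J\bx$ with $J = (1-\beta)G' + \frac{\beta}{n}\ones\ones^\top$ and $G' = \operatorname{diag}\big(g_i'(\delta_i x_i)\big)$, and then bounds the operator norm $\norm{J}_2$ directly by expanding $\norm{J\bv}_2^2$ into three terms --- a $(1-\beta)^2\norm{G'\bv}_2^2$ term, a $\beta^2$ term from the averaging operator, and a cross term $\frac{2\beta(1-\beta)}{n}(\ones^\top\bv)(\ones^\top G'\bv)$ --- each of which is estimated separately to assemble $m^2 = \big((1-\beta)L+\beta\big)^2$. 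You instead split the map \emph{before} taking norms, via the triangle inequality, into the nonlinear diagonal part and the projection $S$, and bound each summand independently: the componentwise mean-value estimate gives $\norm{\mathbf{g}(\bx)}_2 \le L\norm{\bx}_2$, and the observation that $S$ is an orthogonal projection gives $\norm{S\bx}_2 \le \norm{\bx}_2$. The two arguments yield the identical constant $m$, since $(1-\beta)^2L^2 + \beta^2 + 2\beta(1-\beta)L = \big((1-\beta)L+\beta\big)^2$, but yours avoids the cross-term bookkeeping entirely and is the cleaner of the two; what the paper's computation buys is an explicit bound on $\norm{J}_2$ for the linearized operator itself, which is the object it reuses in the discussion of the largest singular value of $(I-B)G'(t)+BS$ in the main text. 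Your remark that one must not bound the nonlinear term by an operator norm without first passing through the mean value theorem is exactly the right caution, and your use of $g_i(0)=0$ from Assumption~\ref{ass:g-assumption} makes that step airtight.
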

\begin{proof}
% $\Vert J(\mathbf{x})\Vert_2$ denotes the largest singular value (square root of the largest eigenvalue of $J^TJ$). 
Using the mean value theorem, one can write 
\[
\mathbf{h}(\bx) = \Big((1-\beta) \text{diag}\big(g_1'(\delta_1 x_1),\dots,g_n'(\delta_n
x_n)\big)  + \frac{\beta}{n}\ones\ones^\top \Big) \mathbf{x},
\]
where $\delta_i \in [0,1]$ for $i = 1, \ldots, n$.
Let $G' = \text{diag}(g_1'(\delta_1 x_1),\dots,g_n'(\delta_n
x_n))$ and $J = (1-\beta) G' + \frac{\beta}{n} \ones\ones^\top$.
Then
\begin{eqnarray*}
  \norm{G'}_2^2  &=&  \max_{\norm{\bv}_2 = 1} \norm{G'\mathbf{v}}_2^2,\\
  &=&  \max_{\norm{\bv}_2 = 1} \sum_i |g'_i(\delta_i x)|^2
        v_i^2,\\
  &\leq&  \max_{1 \leq i \leq n} |g'_i(\delta_i x_i)|^2 \\
  &\leq& \max_{1 \leq i
           \leq n, x \in \mathbb{X}} |g_i'(x)|^2. 
\end{eqnarray*}
Thus, $\norm{G'}_2 \leq \max_{1 \leq i
           \leq n, x \in \mathbb{X}} |g_i'(x)| $.
%  is reduced to
% \begin{align*}
% J_{ij}&=\frac{\partial h_i}{\partial x_j}\\
% &=\begin{cases}
% (1-\beta)g_i'+\beta/n&\quad(j=i)\\
% \beta/n&\quad(j\neq i)
% \end{cases}
% .\end{align*}
% In vector notation, 
% $J(\bx) = (1-\beta) G+\frac{\beta}{n} \mathbf{1}\mathbf{1}^T$ where
% $G=\operatorname{diag}(g_1'(x_1),\dots,g_n'(x_n))$.
% and
% $\mathbf{a}=\mathbf{1}/\sqrt{n}$ such that $\Vert\mathbf{a}\Vert_2=1$. 
% To
% determine the largest singular value, we solve the following optimization
% problem:
Therefore,
% \begin{align*}
% \max_{\Vert\mathbf{v}\Vert_2=1}\mathbf{v}^TJ^TJ\mathbf{v}&
%            =\max_{\Vert\mathbf{v}\Vert_2=1}\left\{
%           (1-\beta)^2\mathbf{v}^TG^TG\mathbf{v}  
%           +\frac{2\beta(1-\beta)}\Big[(G^T\ones)^T\mathbf{v}\Big]^\top(\mathbf{a}^T\mathbf{v})
%           +\beta^2\mathbf{v}^T\mathbf{a}\mathbf{a}^T\mathbf{v}\right\}\\  
% &\leq(1-\beta)^2\max_ig_i'^2+\beta^2+2\beta(1-\beta)\max_{\Vert\mathbf{v}\Vert_2=1}\Big|(G^T\mathbf{a})^T\mathbf{v}\Big|\cdot\Big|\mathbf{a}^T\mathbf{v}\Big|\\
% &\leq(1-\beta)^2\max_ig_i'^2+\beta^2+2\beta(1-\beta)\Big|(G^T\mathbf{a})^T\Big|\\
% &\leq(1-\beta)^2\max_ig_i'^2+\beta^2+2\beta(1-\beta)\max_i|g_i'|\\
% &=\Big[(1-\beta)\max_i|g_i'|+\beta\Big]^2\\
% &=m^2
% .\end{align*}
\begin{eqnarray*}
  \norm{J}_2^2 & = & \max_{\norm{\bv}_2 = 1} \norm{J \bv}_2^2, \\
  & = &
                                                         \max_{\norm{\bv}_2
                                                         = 1}
                                                         \left\{(1-\beta)^2
                                                         \norm{ G'
                                                         \bv}_2^2 +
                                                         \frac{\beta^2}{n^2}
                                                         (\ones^\top
                                                         \bv)^2
                                                         \norm{\ones}_2^2
                                                         +\right.\\
                                                         & &\left.\frac{2\beta(1-\beta)}{n}
                                                         (\ones^\top \bv)
                                                         (\ones^\top G'\bv)
                                                         \right\},\\
  &  \leq & (1-\beta)^2 \norm{G'}_2^2 + \beta^2 + \\
            &&\frac{2\beta(1-\beta)}{n} \big( \max_{\norm{\bv}_2 = 1}
            |\ones^\top v| \big) \big( \max_{\norm{\bv}_2 =
            1}|\ones^\top G'\bv|\big),\\
  & \leq & (1-\beta)^2 \norm{G'}_2^2  + \beta^2 +\\
           &&\frac{2\beta(1-\beta)}{\sqrt{n}} \norm{\ones}_2 \big( \max_{\norm{\bv}_2 =
            1}\norm{G' \bv}_2\big),\\
  & = & (1-\beta)^2 \norm{G'}_2^2  + \beta^2 +
           2\beta(1-\beta)\norm{G'}_2 = m^2.
\end{eqnarray*}
Since $\mathbf{h}(\bx) = J\bx$, it follows that $\norm{\mathbf{h}(\bx)}_2 = \norm{J \bx}_2
\leq \norm{J}_2 \norm{\bx}_2 \leq m \norm{x}_2$.
% Therefore, the largest singular value of $J$ is bounded by $m$.
\end{proof}

Next, we introduce noise in the game dynamics. Let 
$\{\bfomega(t) \in \mathbb{R}^n: t \geq 0\}$ denote an IID sequence of random
vectors where $\bfomega(t) =[\omega_1(t),\dots,\omega_n(t)]^\top$, and 
each $\omega_i(t)$ is an IID sample 
of a zero mean random variable with variance
$\sigma_{\omega}^2$. 
The noisy
game dynamics is given by 
\begin{equation*}
x_i(t+1)=(1-\beta_i)\Big(g_i\big(x_i(t)\big) + \omega_i(t)\Big) +\beta_i u(t),\label{eq:noisy-dynamics}
\end{equation*}
i.e.,~we replace $g_i\big(x_i(t)\big)$ by the noisy state update $g_i\big(x_i(t)\big) +
\omega_i(t)$. This modification models the fact that the players sample a
noisy version of the fitness function, and use these noisy samples to
generate the update; therefore, we expect the state update to be
noisy. Note that the noise is \emph{not} measurement noise, rather noise in the
function evaluation. 

% We introduce i.i.d. noise $\omega_i$ such that $\mathbb{E}[\omega_i]=0$,
% $\mathbb{E}[\omega_i^2]=\sigma_\omega^2$, and noise vector
% $\mathbf{w}=[\omega_1,\dots,\omega_n]^\top$. The noisy exploration
% algorithm is therefore $g_i(x_i)+\omega_i$. Here we are interested in
% how $\beta$ affects the $\operatorname{MSE}$. In order to do so, we
% simplify the system such that $\beta_i=\beta$. According to mean value
% theorem and the theorem above, we have $\Vert
% \mathbf{h}(\mathbf{x})-\mathbf{h}(\mathbf{x}^\ast)\Vert_2\leq
% m\Vert\mathbf{x}-\mathbf{x}^\ast\Vert_2$. 

Define the mean squared error MSE  
\begin{equation*}
\operatorname{MSE}(t)=\frac{1}{n}\sum_ix_i(t)^2=\frac{1}{n}\norm{\bx(t)}_2^2
.\end{equation*}
Then 
% Therefore, the $\operatorname{MSE}$ progression can be computed as follows:
\begin{eqnarray}
\lefteqn{\mathbb{E}\big[\operatorname{MSE}(t+1)\mid \bx(t)\big]}\nonumber\\
  &=&\frac{1}{n}\mathbb{E}\Big[ \norm{\bx(t+1)}_2^2 \mid \bx(t) \Big],
    \nonumber \\
  &=&\frac{1}{n}\mathbb{E}\Big[\norm{\mathbf{h}(\bx(t))+(1-\beta)\bfomega(t)}_2^2 \mid
    \bx(t)\Big], \nonumber \\
  &=&\frac{1}{n}\norm{\mathbf{h}(\bx(t))}_2^2+\frac{(1-\beta)^2}{n}\mathbb{E}\big[\Vert
    \bfomega(t)\Vert_2^2\big], \label{eq:bnd-1}\\
  &\leq&
    \frac{m^2}{n}\Vert \bx(t)\Vert_2^2+(1-\beta)^2\sigma_\omega^2,
    \label{eq:bnd-2}\\
  &=&m^2\operatorname{MSE}(t)+(1-\beta)^2\sigma_\omega^2, \label{eq:bnd-3}
\end{eqnarray}
where \eqref{eq:bnd-1} follows from the fact that $\bfomega(t)$ is
independent of $\bx(t)$, and \eqref{eq:bnd-2} follows from the bound in
Theorem~\ref{thm:Hx-bnd}. Iterating the bound \eqref{eq:bnd-3} we get
\begin{equation*}
  \label{eq:bnd-4}
  \mathbb{E}[\text{MSE}(t)] \leq m^{2t} \text{MSE}(0) + \frac{(1-\beta)^2
    (1-m^{2t})}{(1-m^2)} \sigma_{\omega}^2.
\end{equation*}
\section*{Acknowledgment}

This work is supported in part by Center for the Management of Systemic Risk and Columbia University.

% Can use something like this to put references on a page
% by themselves when using endfloat and the captionsoff option.
%\ifCLASSOPTIONcaptionsoff
%  \newpage
%\fi

% trigger a \newpage just before the given reference
% number - used to balance the columns on the last page
% adjust value as needed - may need to be readjusted if
% the document is modified later
%\IEEEtriggeratref{8}
% The "triggered" command can be changed if desired:
%\IEEEtriggercmd{\enlargethispage{-5in}}

% references section

% can use a bibliography generated by BibTeX as a .bbl file
% BibTeX documentation can be easily obtained at:
% http://mirror.ctan.org/biblio/bibtex/contrib/doc/
% The IEEEtran BibTeX style support page is at:
% http://www.michaelshell.org/tex/ieeetran/bibtex/
\bibliographystyle{IEEEtran}
% argument is your BibTeX string definitions and bibliography database(s)
\bibliography{sr2-references}
\end{document}